\documentclass[12pt]{amsart}
\usepackage{amsmath,amssymb,times,color}
\usepackage{graphicx}
\usepackage{fig4tex}
\usepackage{dsfont}
\definecolor{webred}{rgb}{0.75,0,0}
\definecolor{webgreen}{rgb}{0,0.75,0}
\usepackage[citecolor=webgreen,colorlinks=true,linkcolor=webred]{hyperref}
\usepackage{stmaryrd} 
\usepackage{mathrsfs}
\usepackage{fig4tex}

\newtheorem{thm}{Theorem}[section]
\newtheorem{cor}[thm]{Corollary}
\newtheorem{lem}[thm]{Lemma}

\newtheorem{hyp}[thm]{Assumption}

\theoremstyle{definition}

\theoremstyle{remark}
\newtheorem{rem}[thm]{Remark}

\setcounter{tocdepth}{1}

\textheight=21.0cm
\textwidth=14.8cm
\topmargin 0pt
\evensidemargin 28pt
\oddsidemargin=\evensidemargin

\numberwithin{equation}{section}

%
%
\renewcommand{\Re}{\operatorname{Re}}

\renewcommand{\Im}{\operatorname{Im}}

\newcommand{\Div}{\operatorname{\mathrm{div}}}

\newcommand{\rot}{\operatorname{\mathrm{curl}}}


\newcommand{\db}[1]{_{\raise-0.3ex\hbox{$\scriptstyle #1$}}}
\newcommand{\dd}[1]{_{\raise-1.5pt\hbox{$\scriptstyle #1$}}}

\newcommand{\di}{\displaystyle}

\newcommand{\dr}{{\rm d}}

\newcommand{\iso}{_{\sf +}}
\newcommand{\con}{_{\sf -}}

\newcommand  {\C}{{\mathbb C}}
\newcommand  {\N}{{\mathbb N}}

\newcommand  {\R}{{\mathbb R}}

\newcommand {\Id}{\mathbb {I}}

\renewcommand  {\H}{{\mathrm H}}

\newcommand{\BB}{\boldsymbol{\mathsf B}}
\newcommand  {\EE}{\boldsymbol{\mathsf E}}

\newcommand  {\HH}{\boldsymbol{\mathsf H}}
\newcommand  {\KK}{\boldsymbol{\mathsf K}}
\newcommand  {\LL}{\boldsymbol{\mathsf L}}
\renewcommand  {\L}{{\mathrm L}}
\newcommand  {\NN}{\boldsymbol{\mathsf N}}

\newcommand  {\RR}{\boldsymbol{\mathsf R}}

\newcommand  {\VV}{\,\underline{\!{\boldsymbol{\mathfrak H}}\!}\,}

\newcommand  {\XX}{\boldsymbol{\mathsf X}}

\newcommand  {\mur}{{\boldsymbol{\mu_{r}}}}

\newcommand  {\g}{{\boldsymbol{\mathsf g}}}

\newcommand  {\jj}{{\boldsymbol{\mathsf j}}}
\newcommand  {\nn}{\boldsymbol{\mathsf n}}

\newcommand  {\uu}{\boldsymbol{\mathsf u}}
\newcommand  {\vv}{\boldsymbol{\mathsf v}}

\newcommand  {\xx}{\boldsymbol{\mathsf x}}
\newcommand  {\yy}{\boldsymbol{\mathsf y}}

\newcommand  {\cH}{\mathcal{H}}
\newcommand  {\cC}{\mathcal{C}}

\newcommand  {\cU}{{\mathcal U}}

\newcommand  {\cV}{{\mathfrak H}}

\newcommand  {\bH}{\mathbf{H}}
\newcommand  {\bL}{\mathbf{L}}

\newcommand  {\sj}{\mathsf{j}}
\newcommand  {\eps}{\mathsf{\varepsilon}}

\newcommand  {\sv}{\mathfrak{h}}

\newcommand  {\bs}{\underline \sigma}
\newcommand  {\bm}{\underline \mu}

\def\on#1{\!\left.\vphantom{|_|}\right|_{#1}}  

\definecolor{mpurple}{rgb}{0.6,0,0.8}
\definecolor{myblue}{rgb}{0.,0.2,0.8}
\definecolor{mygreen}{rgb}{0,0.75,0.0}
\definecolor{mred}{rgb}{0.9,0,0}
\definecolor{mbrun}{rgb}{0.8,0.5,0}


\newcommand{\Bk}{\color{black}}

\begin{document}

\title[Uniform estimates for transmission problems in electromagnetism
]{Uniform estimates for transmission problems in electromagnetism with high contrast in magnetic permeabilities 
}

\author{Victor P\'eron}
\address{Laboratoire de Math\'ematiques et de leurs Applications de Pau (UMR CNRS 5142), E2S UPPA, CNRS, Universit\'e de Pau et des Pays de l'Adour, 64013 Pau cedex, France, victor.peron@univ-pau.fr
}

\begin{abstract}
 We consider the time-harmonic Maxwell equations set on a domain made up of two subdomains that represent a  magnetic conductor and a non-magnetic material, and we assume that the relative magnetic permeability  $\mu_{r}$  between the two materials is high.  We prove uniform a priori estimates for Maxwell solutions when the interface between the two subdomains is supposed to be Lipschitz.  
 Assuming smoothness for the interface between the subdomains, we prove also that the magnetic field possesses a multiscale expansion in powers of  $\eps=1/ \sqrt{\mu_{r}}$ with {\em profiles} rapidly decaying inside the magnetic conductor. 
\end{abstract}
\date{\today, Version 1}

\maketitle

\noindent {\bf Keywords}: {\it Maxwell equations, High relative permeability, Lipschitz domains, Asymptotic expansions}
\vskip0.5cm

\section{Introduction}

The numerical computation of the electromagnetic field in strongly heterogeneous media is crucial in many subject areas in real-world applications, see, {\it e.g.},  \cite{eskola2001measurement,PardoDTP06,HJN08,mukherjee20113d,CDFP11,noh2016analysis,ren2016homogenization,issa2019boundary,Pe19,ammari2020superresolution,kawaguchi2022time,ammari2023mathematical,AP23} where additional references
may also be found. The computation can be made particularly difficult due in particular to very high contrasts of magnetic permeabilities, see, {\it e.g.}, \cite{eskola2001measurement,mukherjee20113d,noh2016analysis,ren2016homogenization,AP23}. 
In several fields of applied geophysics, for instance, the computation of the controlled source electromagnetic induction (CSEM) response of conductive and permeable bodies buried in a conductive and permeable medium is widely used, but the presence of buried objects, such as steel pipes, concrete rebar, cables, drums, unexploded ordnance, or storage tanks introduce significant complexity into the CSEM response, especially since these objects often present significant relative magnetic permeability, see, {\it e.g.}, \cite{eskola2001measurement,mukherjee20113d,noh2016analysis}.
High contrasts in magnetic permeabilities can also pose a significant challenge for developing homogenization techniques to deduce effective properties of heterogeneous materials. These techniques are particularly necessary for the design of optimal electromagnetic devices based on soft magnetic composites in many electrical engineering applications, see, {\it e.g.}, \cite{ren2016homogenization}.

 Many works in the literature are  related to issues of uniform estimates for stiff transmission problems in electromagnetism. We refer the reader, for instance, on scattering problems of an electromagnetic plane wave by a  medium  of high index (i.e., with high complex permeability, and high complex permittivity) and of class $\mathcal{C}^2$ (smooth),  \cite{artola1991diffraction,artola1992diffraction}.  In particular,   the authors provided a rigorous proof of Leontovich conditions and proved accurate error estimates in Ref.  \cite{artola1992diffraction}, and they tackled the scattering problem by a thin layer of high index  in Ref. \cite{artola1991diffraction}.  We also refer the reader  to the work in Ref. \cite{huang2007uniform} in which the authors provide  uniform a priori estimates for static Maxwell interface problems, and  to the works in Ref.  \cite{HJN08,CDP09,CDFP11} for transmission problems with high contrast of electric conductivities.

In this work we investigate a transmission problem  in materials  presenting high contrast of magnetic permeabilities.   
The  domain of interest is made up of two subdomains that represent a magnetic conductor and a non-magnetic material, and the relative magnetic permeability $\mu_{r}$  between the materials can be very high. Our interest is the solvability of  the time-harmonic Maxwell equations  together with uniform energy estimates in $\bL^{2}$ for the electromagnetic field when  $\mu_{r}$ tends to infinity. This issue of uniform estimates is not obvious, and this is especially true since we do not assume that the interface between the subdomains is smooth.

The main result of this paper is uniform a priori estimates for Maxwell solutions when the interface between the two subdomains is supposed to be Lipschitz, cf. Theorem \ref{2T0} in Sec. \ref{S2}. This new result shows in particular that the magnetic field tends to $0$ in energy norm  inside the magnetic conductor   as $\mu_{r}$ tends to infinity. Our result is valid for a right-hand side in $\bL^{2}$. This is notably a difference with the  uniform $\bL^{2}$ estimates  at high conductivity \cite[Theorem 2.1]{CDP09}, which is valid for  data in $\bL^{2}$ with divergence in $\L^{2}$.  

The proof of our main result is based on an appropriate decomposition of the magnetic field into a vector field in the Sobolev space $\bH^{1}$ and a gradient field which is piecewise $\bH^{\frac12}$ (does not belong to the energy space), and the gradient part is estimated thanks to uniform estimates for scalar transmission problems with constant coefficients on two subdomains.  Note that the uniform estimates for these scalar transmission problems (cf. Lemma \ref{2L9} in Sec. \ref{App0}) are a consequence of a more general result \cite[Theorem 2.1]{CDP09}, and also that the decomposition of the magnetic field based on a vector potential technique is rather classical \cite{ABDG98}. However, the proof of the main result on uniform estimates for the electromagnetic field (Theorem \ref{2T0}) is new.  
In particular,  the proof of uniform $\bL^{2}$ estimates for the electric field at high conductivity \cite[Lemma 4.1]{CDP09} does not seem to be easily 
adaptable to our problem   because we consider that the  relative permeability can be unbounded.

As an application of our uniform estimates, we develop also an argument for the convergence of an asymptotic expansion. Assuming {\em smoothness for the interface} between the subdomains, we prove rigorously that the magnetic field  possesses a multiscale expansion in powers of  $\eps=\dfrac1{\sqrt{\mu_{r}}}$ with {\em profile} terms rapidly decaying inside the magnetic conductor,  cf. Sec. \ref{S3bis}.

We also refer the reader to several works which are devoted to asymptotic expansions of the electromagnetic field  at high relative magnetic permeability  (\cite{PePo21,APPK21,AP23}) or at high conductivity (see, {\em e.g.}, \cite{S83,MS84,MS85,HJN08,CDP09,DFP09,CDFP11}) in a domain made up of two subdomains when the interface is smooth: see, {\em e.g.}, \cite{S83,MS84,MS85,PePo21,APPK21,AP23} for plane interface and eddy current approximations, and \cite{HJN08,CDP09,DFP09,CDFP11} for a three-dimensional model of  {\em  skin effect} in non-magnetic materials.

This paper is organized as follows. In Section \ref{S2} we introduce the framework and we state the main result. In Section \ref{S4} we prove  uniform estimates for Maxwell solutions when the relative magnetic permeability is high. In  Section \ref{S3bis} we apply these uniform estimates for proving the convergence of an asymptotic expansion of the magnetic field at high relative magnetic permeability, and we provide the first terms of this expansion. We give also elements of proof for this expansion in Appendix \ref{AppA}.

\section{Framework and main result}
\label{S2}

Let $\Omega$ be a smooth bounded simply connected domain in ${\R}^3$, and $\Omega_{-}\subset\subset\Omega$ be a Lipschitz connected subdomain of $\Omega$. 
We denote by $\Gamma$ the boundary of $\Omega$, and by  $\Sigma$  the boundary of $\Omega_{-}$. 
Finally, we denote by $\Omega_{+}$ the complementary of $\overline{\Omega}_{-}$ in $\Omega$, cf. Figure~\ref{F1}. 

We consider the time-harmonic Maxwell equations given by Faraday's and Amp\`ere's laws in $\Omega$:
\begin{equation}
    \rot \ \EE - i \omega\bm \HH = 0 \quad \mbox{and}\quad
    \rot \ \HH + (i\omega\varepsilon_0 - \bs) \EE =   \jj
    \quad\mbox{in}\quad\Omega\, .
\label{MS}
\end{equation}
Here, $(\EE,\HH)$ represents the electromagnetic field, $\varepsilon_0$ is the electric permittivity, $\omega$ is the angular frequency ($\omega\neq0$ and the time convention is $\exp(-i\omega t)$), $\jj$ represents a current density  and is supposed to belong to  $\bL^2(\Omega)$, $\bs$ is the electric conductivity, and  $\bm$ is the magnetic permeability. We assume that the domain $\Omega$ is made of two connected subdomains $\Omega_+$ and $\Omega_-$ in which the coefficients $\bs$ and $\bm$ take different values $(\sigma_+>0,\sigma_->0)$ and $(\mu_{+}>0, \mu_{-}>0)$, respectively, and we denote by $\mu_{r}$ the relative magnetic permeability between the  subdomains  $\Omega_-$ and $\Omega_+$, and which is defined as: 
\begin{equation}
\label{E0}
 \mu_{r}=\mu_{-}/\mu_{+} \ .
\end{equation}
\begin{figure}[h]
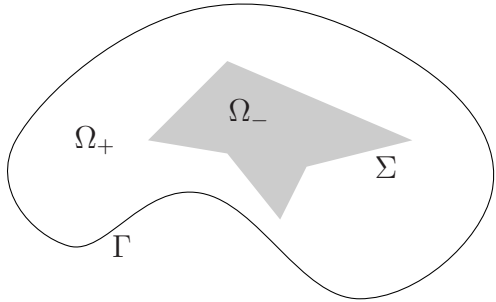

\begin{center}
\figinit{1.0pt}
\figpt 1:(-100,20)
\figpt 2:(-30,70)\figpt 3:(50,50)
\figpt 4:(80,0)\figpt 5:(30,-40)
\figpt 6:(-30,0)\figpt 7:(-80,-20)
\figpt 8:(-50,20)\figpt 9:(-20,50)\figpt 10:(50,20)
\figpt 12:(10,10)\figpt 13:(0,-10)
\figpt 14:(-20,15)%
\figpt 16:(-70,20)\figpt 15:(-10,30)
\figpt 17:(40,10) \figpt 18:(-60,-20)
\figpt 21:(50,65)
\figpt 19:(61,65) \figpt 20:(3,27)
\psbeginfig{}
\pscurve[1,2,3,4,5,6,7,1,2,3]
\pssetfillmode{yes}\pssetgray{0.8}
\psline[8,9,10,12,13,14,8]
\pssetfillmode{no}\pssetgray{0}
\psendfig
\end{center}
\figvisu{\figBoxA}{}{
\figwritew 15: $\Omega_{-}$(-6pt)
\figwritec [16]{$\Omega_{+}$}
\figwritec [17]{$\Sigma$}
\figwritec [18]{$\Gamma
$}
}
\centerline{\box\figBoxA}
\caption{The domain $\Omega$ and its subdomains $\Omega_-$ and  $\Omega_+$}
\label{F1}
\end{figure}
In this work we are particularly interested in the case where this parameter can be high, which is often the case when the domain $\Omega_-$ represents a linear magnetic conductive material and the domain  $\Omega_+$ represents a non-magnetic material.

To complement the Maxwell harmonic equations \eqref{MS}, we consider either the perfectly insulating electric boundary conditions 
\begin{equation}
\label{PIbc}
   \EE\cdot\nn=0 \quad\mbox{and}\quad \HH\times\nn=0\quad\mbox{on}\quad\Gamma\, ,
\end{equation}
or the perfectly conducting electric boundary conditions 
\begin{equation}
\label{Pcbc}
   \EE\times\nn=0 \quad\mbox{and}\quad \HH\cdot\nn=0\quad\mbox{on}\quad\Gamma\,.
\end{equation}

\subsection{Main result: uniform estimates in  Lipschitz domains}
Hereafter, we denote by  $\|\cdot \|_{0,\mathcal{O}}$ the norm in $\bL^2(\mathcal{O})=\L^2(\mathcal{O})^{3}$. In the framework above it is possible to prove uniform a priori estimates for the Maxwell transmission problem:
\begin{thm}
\label{2T0}
Let $\sigma_{\pm}>0$, and  $\mu_{+}>0$. If the interface $\Sigma$ is Lipschitz,  there are constants $\mu_{\star}$ and $C>0$ such that for all $\mu_{r}\geqslant\mu_{\star}$ the Maxwell problem \eqref{MS} with boundary conditions \eqref{PIbc} and data $\jj\in\bL^2(\Omega)$ has a unique solution $(\EE,\HH)$ in $\bL^2(\Omega)^2$, which satisfies:
\begin{equation}
   \|\HH\|_{0,\Omega} + \|\EE\|_{0,\Omega} 
    + \sqrt{\mu_{r}}\, \|\HH\|_{0,\Omega_-}    \leqslant C \|\jj\|_{0,\Omega} . 
\label{3E1}
\end{equation}
A similar result holds for boundary conditions \eqref{Pcbc}. 
\end{thm}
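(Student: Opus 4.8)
The plan is to reduce everything to the magnetic field, to read \eqref{3E1} off a plain energy identity (which also yields uniqueness), and to obtain existence from that estimate together with a Fredholm argument whose only substantial ingredient is a compact embedding tailored to the Lipschitz geometry. Set $\zeta:=i\omega\varepsilon_0-\bs$, so that $\Re\zeta=-\bs\leqslant-\min(\sigma_+,\sigma_-)<0$. If $(\EE,\HH)\in\bL^2(\Omega)^2$ solves \eqref{MS} with \eqref{PIbc}, then Amp\`ere's law gives $\EE=\zeta^{-1}(\jj-\rot\HH)$, and since $\jj\in\bL^2(\Omega)$ one reads off at once that $\rot\HH=\jj-\zeta\EE\in\bL^2(\Omega)$ and $\rot\EE=i\omega\bm\HH\in\bL^2(\Omega)$; hence $\EE,\HH\in\bH(\rot,\Omega)$, the relation $\HH\times\nn=0$ holds weakly on $\Gamma$, and $\Div(\bm\HH)=\tfrac1{i\omega}\Div\rot\EE=0$ in $\Omega$, so that $\HH\times\nn$ and $\bm\HH\cdot\nn$ are continuous across $\Sigma$ while $\HH\cdot\nn$ is not. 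These are the structural facts used below.

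For the \emph{a priori} bound I would pair Amp\`ere's law with $\overline\EE$ and integrate by parts: the boundary term $\int_\Gamma(\HH\times\nn)\cdot\overline\EE$ vanishes by \eqref{PIbc}, and $\int_\Omega\rot\HH\cdot\overline\EE=\int_\Omega\HH\cdot\overline{\rot\EE}=-i\omega\int_\Omega\bm|\HH|^2$, so
\begin{equation*}
  -i\omega\int_\Omega\bm\,|\HH|^2+\int_\Omega\zeta\,|\EE|^2=\int_\Omega\jj\cdot\overline\EE .
\end{equation*}
Its real part gives $\int_\Omega\bs\,|\EE|^2=-\Re\!\int_\Omega\jj\cdot\overline\EE$, hence $\|\EE\|_{0,\Omega}\leqslant(\min(\sigma_+,\sigma_-))^{-1}\|\jj\|_{0,\Omega}$; its imaginary part then gives $\omega\int_\Omega\bm|\HH|^2=\omega\varepsilon_0\|\EE\|_{0,\Omega}^2-\Im\!\int_\Omega\jj\cdot\overline\EE\lesssim\|\jj\|_{0,\Omega}^2$. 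Since $\int_\Omega\bm|\HH|^2=\mu_+\bigl(\|\HH\|_{0,\Omega_+}^2+\mu_r\|\HH\|_{0,\Omega_-}^2\bigr)$ with $\mu_+$ fixed, this is exactly \eqref{3E1} with a constant independent of $\mu_r$ (after enlarging $\mu_\star$ so that $\mu_r\geqslant1$), and the choice $\jj=0$ forces $\EE=\HH=0$, i.e.\ uniqueness.

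For existence I would pass to the second-order variational formulation for $\HH$ in $\bH_0(\rot,\Omega):=\{\uu\in\bH(\rot,\Omega):\uu\times\nn|_\Gamma=0\}$,
\begin{equation*}
  \int_\Omega\zeta^{-1}\rot\HH\cdot\overline{\rot\vv}+i\omega\int_\Omega\bm\,\HH\cdot\overline\vv=\int_\Omega\zeta^{-1}\jj\cdot\overline{\rot\vv}\qquad\text{for all }\vv\in\bH_0(\rot,\Omega),
\end{equation*}
whose right-hand side is a bounded functional exactly because only $\rot\vv$ enters it --- this is why $\jj\in\bL^2(\Omega)$, with no condition on $\Div\jj$, is admissible here, unlike in \cite[Theorem~2.1]{CDP09}. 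Testing against gradients $\vv=\grad q$, $q\in\H^1_0(\Omega)$, forces $\Div(\bm\HH)=0$, so $\HH$ lies in $\bX:=\{\uu\in\bH_0(\rot,\Omega):\Div(\bm\uu)=0\}$, and it suffices to solve on $\bX$, where the sesquilinear form is coercive up to a compact perturbation provided $\bX\hookrightarrow\bL^2(\Omega)$ is compact; this is the one place where the Lipschitz hypothesis on $\Sigma$ enters. I would establish the compact embedding via the classical vector-potential decomposition \cite{ABDG98}: write $\uu\in\bX$ as $\uu=\ww+\grad\chi$ with $\ww\in\bH^1(\Omega)$, $\rot\ww=\rot\uu$, $\Div\ww=0$ (so $\|\ww\|_{1,\Omega}\lesssim\|\rot\uu\|_{0,\Omega}$, using that $\Omega$ is smooth), and $\chi$ piecewise $\H^{3/2}$ solving the constant-coefficient scalar transmission problem $\Div(\bm\grad\chi)=-\Div(\bm\ww)$, whose solvability and regularity across the Lipschitz interface $\Sigma$, \emph{uniformly in $\mu_r$}, is exactly Lemma \ref{2L9} (a case of \cite[Theorem~2.1]{CDP09}). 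Rellich's theorem applied to $\ww$ plus the gain of regularity for $\chi$ yields the compactness; the Fredholm alternative on $\bX$ together with the uniqueness already established produces a unique $\HH\in\bX$, and then $\EE:=\zeta^{-1}(\jj-\rot\HH)$ satisfies Faraday's law and the boundary conditions, \eqref{3E1} being already in hand. The case \eqref{Pcbc} is handled symmetrically: pair Faraday's law with $\overline\HH$ and use $\HH\cdot\nn=0$ on $\Gamma$, and formulate the second-order problem in $\EE\in\bH_0(\rot,\Omega)$.

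The estimate and the uniqueness are soft; the genuine obstacle will be the compact embedding $\bX\hookrightarrow\bL^2(\Omega)$ for a merely Lipschitz $\Sigma$ --- i.e.\ the detour through the splitting of $\HH$ into an $\bH^1(\Omega)$-field and a gradient that is only piecewise $\bH^{1/2}$, together with the uniform scalar estimate of Lemma \ref{2L9}. The delicate bookkeeping there is to align the traces so that the splitting is well posed: the tangential trace of $\ww$ on $\Gamma$ must be compatible with $\HH\times\nn=0$, and the jump of $\ww\cdot\nn$ across $\Sigma$ must be precisely the data of the scalar problem for $\chi$, so that $\grad\chi$ --- which lies only piecewise in $\bH^{1/2}$, not in $\bH^1$ --- carries the entire interface singularity of $\HH$. (One can instead avoid Fredholm altogether: the purely imaginary term $i\omega\int_\Omega\bm|\uu|^2$ together with the bound $-\Re\!\int_\Omega\zeta^{-1}|\rot\uu|^2\gtrsim\|\rot\uu\|_{0,\Omega}^2$ makes the sesquilinear form coercive on $\bH_0(\rot,\Omega)$ with a constant independent of $\mu_r$, so that Lax--Milgram applies directly; but the Fredholm route is the one matching the announced use of Lemma \ref{2L9}.)
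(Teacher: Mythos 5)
Your argument is correct, but it takes a genuinely different---and considerably more economical---route than the paper for the heart of the matter, the uniform estimate \eqref{3E1}. The paper obtains $\|\HH\|_{0,\Omega}\leqslant C_0\|\jj\|_{0,\Omega}$ (Lemma \ref{5L1}) by contradiction, via the splitting $\HH=\vv+\nabla\varphi$ with $\vv\in\bH^1(\Omega)$ and $\varphi$ piecewise $\H^{3/2}$ (Lemma \ref{2L9}), followed by a compactness extraction; the bounds on $\|\EE\|$, $\|\rot\HH\|$ and $\sqrt{\mu_r}\|\HH\|_{0,\Omega_-}$ are then derived in Corollary \ref{5C1} from the real and imaginary parts of the energy identity \eqref{5E22}. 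You observe, correctly, that the energy identity alone already delivers everything: the real part controls $\|\EE\|_{0,\Omega}$ (equivalently $\|\rot\HH\|_{0,\Omega}$) by $\|\jj\|_{0,\Omega}$ through $\Re(i\omega\varepsilon_0-\bs)=-\bs\leqslant-\min(\sigma_+,\sigma_-)$, and the imaginary part then controls $\mu_+\|\HH\|^2_{0,\Omega_+}+\mu_-\|\HH\|^2_{0,\Omega_-}$, i.e.\ both $\|\HH\|_{0,\Omega}$ and $\sqrt{\mu_r}\,\|\HH\|_{0,\Omega_-}$ at once, with constants depending only on $\sigma_\pm,\mu_+,\omega,\varepsilon_0$ (for $\mu_r\geqslant1$). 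This is in fact exactly the computation carried out in the proof of Corollary \ref{5C1}; the point you exploit, and the paper does not, is that the imaginary-part display already contains the term $\mu_+\|\HH\|^2_{0,\Omega_+}$ and hence yields Lemma \ref{5L1} without the contradiction argument. Likewise your coercivity remark is sound: a combination $-\Re+t\,\Im$ of the sesquilinear form for small $t>0$ dominates $\|\rot\uu\|^2_{0,\Omega}+\|\uu\|^2_{0,\Omega}$ uniformly in $\mu_r\geqslant1$, so Lax--Milgram on $\bH_0(\rot,\Omega)$ gives existence directly, and neither Lemma \ref{2L9} nor the Lipschitz hypothesis on $\Sigma$ is actually consumed. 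What the paper's heavier machinery buys is robustness rather than the estimate itself: the decomposition and the compact embedding are what one would need if the absorption were absent ($\bs\equiv0$), where the form is no longer coercive and a Fredholm or contradiction structure becomes unavoidable.

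Two points to tighten. First, the passage from a variational solution back to the strong system \eqref{MS}--\eqref{PIbc}: one recovers Faraday's and Amp\`ere's laws and $\HH\times\nn=0$ on $\Gamma$, but the condition $\EE\cdot\nn=0$ on $\Gamma$ only comes out as $(i\omega\varepsilon_0-\bs)\,\EE\cdot\nn=\jj\cdot\nn$ (since $\rot\HH\cdot\nn=\Div_\Gamma(\HH\times\nn)=0$), and for a general $\jj\in\bL^2(\Omega)$ this trace is not even defined; this weakness is shared with the paper's Corollary \ref{5C2} and should at least be acknowledged (the boundary condition must be understood variationally). Second, if you retain the Fredholm route, the compactness of $\{\uu\in\bH_0(\rot,\Omega)\,:\,\Div(\bm\uu)=0\}$ in $\bL^2(\Omega)$ across a merely Lipschitz $\Sigma$ does require the full splitting plus Lemma \ref{2L9} (Rellich for the $\bH^1$ part, compactness of $\H^{3/2}\hookrightarrow\H^1$ piecewise for the gradient part), with exactly the trace bookkeeping you mention---the jump datum is $g=\ww\cdot\nn$ on $\Sigma$ with $\int_\Sigma g\,\dr s=0$ coming from $\Div\ww=0$---which is what the paper carries out in Section \ref{Sec3.1}; but, as you note, this route is dispensable here.
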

This result is proved in Section \ref{S4}. The proof  is based on an appropriate decomposition of the magnetic field into a regular field in $\bH^{1}(\Omega)$ and a gradient field. The gradient part  is estimated thanks to piecewise $\mathrm{H}^{3/2}$ uniform estimates for a scalar transmission problem with constant coefficients on two subdomains (cf.  Lemma \ref{2L9} in Section \ref{App0}). 

In that follows, we introduce for convenience the following parameter in the Maxwell equations \eqref{MS} 
\begin{equation}
\label{epsilon} 
\eps=\dfrac1{\sqrt{\mu_{r}}} 
\,.
\end{equation}
Hence, $\eps$ tends to $0$ as  $\mu_{r}$ tends to infinity. Then  for $\mu_{r}\geqslant\mu_{\star}$, where the constant $\mu_{\star}$ is given by Theorem \ref{2T0}, 
 we denote by $(\EE_{(\eps)},\HH_{(\eps)})$ the solution of the system \eqref{MS} --\eqref{PIbc}.  As an application of uniform estimates \eqref{3E1} we will develop an argument for the convergence of  an asymptotic expansion of $(\EE_{(\eps)},\HH_{(\eps)})$  as $\eps\to0$,  see Section \ref{Svalid}. 
\Bk

\subsection{Magnetic formulation}
We deduce from the Maxwell system \eqref{MS}-\eqref{PIbc} the following variational formulation for the magnetic field $\HH_{(\eps)}$. The variational space is the Hilbert space $\bH_0(\rot,\Omega)$:
\begin{equation}
 \bH_0(\rot,\Omega) =\{\uu\in\bL^2(\Omega)\; |\ \rot\uu\in\bL^2(\Omega),\, 
 \uu\times\nn = 0  \ \mbox{on}\ \Gamma\}\ , 
\end{equation}
endowed with its graph norm, and the variational problem writes
\\[1ex]
{\it Find $\HH_{(\eps)} \in \bH_0(\rot,\Omega)$ such that for all $\KK \in \bH_0(\rot,\Omega)$}
\begin{multline}
\int_{\Omega} \left(\left(1+ i \frac{\bs}{\omega\varepsilon_{0}}\right)^{-1}\rot\HH_{(\eps)} \cdot \rot\overline\KK- \kappa_{+}^2\mur(\eps)\HH_{(\eps)}\cdot\overline\KK \right) \dr\xx = 
\\
  \int_{\Omega}   \left( 1+ i \frac{\bs}{\omega\varepsilon_{0}}
 \right)^{-1}\jj\cdot\rot\overline\KK\,\dr\xx\ .
\label{FVH0}
\end{multline}
Here we have set
\begin{equation}
\label{2Eeps}
  \mur(\eps)={\mathbf{1}_{\Omega\iso}}  
   + {\frac{1}{\eps^2}}\,{\mathbf{1}_{\Omega\con}} 
 \ ,    \quad 
 \quad\mbox{and}\quad    \kappa_{+}:=\omega\sqrt{\varepsilon_{0}\mu_{+}} \ .
\end{equation}

\section{Proof of uniform estimates for Maxwell solutions at high relative magnetic permeability}
\label{S4}

We consider the harmonic Maxwell system \eqref{MS}-\eqref{PIbc}. We are going to prove the following statement:
\begin{lem}
\label{5L1}
Let $\sigma_{\pm}>0$, and  $\mu_{+}>0$. There are constants $\mu_\star$ and $C_0>0$ such that if $\mu_{r}\geqslant \mu_\star$ any solution $(\EE,\HH)\in\bL^2(\Omega)^2$ of problem \eqref{MS} with boundary condition \eqref{PIbc} and data $\jj\in\bL^2(\Omega)$ 
satisfies the estimate
\begin{equation}
  \|\HH\|_{0,\Omega} \leqslant C_0 \|\jj\|_{0,\Omega}.
\label{5E1}
\end{equation}
A similar statement holds for boundary conditions \eqref{Pcbc}.
\end{lem}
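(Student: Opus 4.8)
The plan is to argue by contradiction via a compactness/uniqueness argument. Suppose the estimate \eqref{5E1} fails: then there is a sequence $\mu_{r}^{(n)}\to\infty$ (equivalently $\eps_n\to0$) and solutions $(\EE^{(n)},\HH^{(n)})$ of \eqref{MS}--\eqref{PIbc} with data $\jj^{(n)}$ such that $\|\HH^{(n)}\|_{0,\Omega}=1$ while $\|\jj^{(n)}\|_{0,\Omega}\to0$. The goal is to extract a subsequence converging to a nonzero limit that solves a homogeneous limit problem with only the trivial solution, a contradiction. First I would work from the magnetic variational formulation \eqref{FVH0}: testing with $\KK=\HH^{(n)}$ and taking real and imaginary parts yields, using $\sigma_\pm>0$, a bound on $\|\rot\HH^{(n)}\|_{0,\Omega}$ and, crucially, the bound $\eps_n^{-2}\|\HH^{(n)}\|_{0,\Omega_-}^2 \lesssim \kappa_+^2\|\HH^{(n)}\|_{0,\Omega}^2 + \|\jj^{(n)}\|_{0,\Omega}\|\rot\HH^{(n)}\|_{0,\Omega}$, so that $\HH^{(n)}\to0$ in $\bL^2(\Omega_-)$ at rate $\eps_n$. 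Thus $(\HH^{(n)})$ is bounded in $\bH_0(\rot,\Omega)$; moreover $\Div(\bm\HH^{(n)})=0$ follows from Faraday's law, giving control of the normal-trace jump and of $\Div\HH^{(n)}$ on each subdomain.

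Next comes the decomposition announced in the introduction: write $\HH^{(n)} = \ww^{(n)} + \grad q^{(n)}$ with $\ww^{(n)}\in\bH^1(\Omega)$ bounded and $q^{(n)}$ piecewise $\H^{3/2}$, where the regular part is constructed by a vector-potential argument (as in \cite{ABDG98}) from $\rot\HH^{(n)}$, which is bounded in $\bL^2$. The scalar potential $q^{(n)}$ then solves a scalar transmission problem with piecewise constant coefficients $\mur(\eps_n)$ across $\Sigma$, with right-hand side controlled by $\Div\ww^{(n)}$ and the jump data; Lemma \ref{2L9} supplies \emph{uniform} (in $\eps_n$) piecewise $\H^{3/2}$ estimates for $q^{(n)}$. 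By Rellich, $\ww^{(n)}$ converges strongly in $\bL^2(\Omega)$ and $\grad q^{(n)}$ converges strongly in $\bL^2$ on each subdomain along a subsequence; combined with $\|\HH^{(n)}\|_{0,\Omega}=1$ this produces a nonzero limit $\HH^{(0)}$. Passing to the limit in \eqref{FVH0} (the term $\eps_n^{-2}\mathbf{1}_{\Omega_-}\HH^{(n)}$ forces $\HH^{(0)}\equiv0$ on $\Omega_-$ and the surviving equation on $\Omega_+$ is $\rot\big((1+i\sigma/\omega\varepsilon_0)^{-1}\rot\HH^{(0)}\big)=\kappa_+^2\HH^{(0)}$ with $\HH^{(0)}\times\nn=0$ on $\Gamma$ and $\HH^{(0)}$ having vanishing tangential trace on $\Sigma$) yields an eigenvalue-type homogeneous problem on $\Omega_+$.

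The main obstacle is this final step: showing the limiting homogeneous problem has only the trivial solution, i.e. that $\kappa_+^2 = \omega^2\varepsilon_0\mu_+$ is not a resonance for the interior Maxwell problem on $\Omega_+$ with the boundary condition inherited on $\Sigma$ (a perfectly-conducting-type condition coming from $\HH^{(n)}\to0$ in $\Omega_-$). This requires care because $\Sigma$ is only Lipschitz, so one must use the appropriate Lipschitz-domain Maxwell theory (density and compactness results of \cite{ABDG98} on Lipschitz domains); one recovers the associated $\EE^{(0)}$ from $\rot\HH^{(0)}$ and checks it satisfies the homogeneous Maxwell system on $\Omega_+$ with the correct boundary conditions, and appeals to the fact that for $\omega\neq0$ and $\sigma_+>0$ the absorption makes the homogeneous problem injective. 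Once $\HH^{(0)}=0$ is established the contradiction with $\|\HH^{(0)}\|_{0,\Omega}=\lim\|\HH^{(n)}\|_{0,\Omega}=1$ closes the argument; the case of boundary conditions \eqref{Pcbc} is identical after exchanging the roles of the electric and magnetic boundary traces.
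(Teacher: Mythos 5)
Your proposal is essentially correct and shares the paper's skeleton: contradiction via a normalized sequence, the curl bound from the real/imaginary parts of the magnetic variational formulation, the vector-potential decomposition $\HH=\vv+\nabla\varphi$ with $\vv\in\bH^1$ controlled by $\|\rot\HH\|_{0,\Omega}$, the uniform piecewise $\H^{3/2}$ estimate of Lemma \ref{2L9} for the scalar potential, and Rellich to get a strongly convergent subsequence with limit of norm $1$. Where you diverge is the concluding step. The paper never sets up a limiting homogeneous Maxwell problem on $\Omega_+$: it simply takes the imaginary part of \eqref{5EVm} with test function $\HH_m$ and observes that, since $\|\rot\HH_m\|_{0,\Omega}\leqslant C\|\jj_m\|_{0,\Omega}\to0$ by \eqref{5ECH}, the \emph{entire} right-hand side of \eqref{5EVHim} tends to zero, which forces both $\sqrt{\mu_m}\,\|\HH_m^-\|_{0,\Omega_-}\to0$ and $\|\HH_m^+\|_{0,\Omega_+}\to0$; the limit is therefore zero on all of $\Omega$ and the contradiction is immediate. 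You extracted only the $\Omega_-$ half of this information (and even weakened it by moving the $\kappa_+^2\|\HH\|^2_{0,\Omega_+}$ term to the wrong side), which is why you are left having to pass to the limit in \eqref{FVH0} and prove injectivity of a homogeneous problem on $\Omega_+$. That route can be completed — the absorption $\sigma_+>0$ does make the limit problem injective, as you say — but it is the delicate part of your plan: the singular term $\eps_n^{-2}\int_{\Omega_-}\HH^{(n)}\cdot\overline\KK$ is only $\mathcal{O}(\eps_n^{-1})$ for a generic test function, so you must restrict to test functions vanishing in $\Omega_-$ with zero tangential trace on $\Sigma$, and you must separately establish $\HH^{(0)}\times\nn=0$ on $\Sigma$ from the weak $\bH(\rot,\Omega)$ limit before the energy/absorption argument applies. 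All of this evaporates if you keep both subdomain terms on the left of the imaginary-part identity, which is exactly the shortcut the paper takes. One small omission on the decomposition side: Lemma \ref{2L9} requires the compatibility condition $\int_\Sigma g\,\dr s=0$, which the paper checks via $\int_\Sigma\vv_m\cdot\nn\,\dr s=\int_{\Omega_-}\Div\vv_m\,\dr\xx=0$; you should include this verification.
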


This lemma is the key for the proof of Theorem \ref{2T0} and is going to be proved in the next subsection.  As a consequence of this lemma, we will obtain estimates \eqref{3E1}:
\begin{cor}
\label{5C1}
Let  $\sigma_{\pm}>0$, and  $\mu_{\pm}>0$. Let $(\EE,\HH)\in\bL^2(\Omega)^2$ be solution of problem \eqref{MS} with boundary condition \eqref{PIbc} and data $\jj\in\bL^2(\Omega)$. Then we have $\Div(\bm \HH)=0$ in $\Omega$,
and if $\HH$ satisfies estimate \eqref{5E1}, then setting  $\mathrm{m}=\min(\sqrt{\omega^2 \varepsilon_0^{2} +\sigma^{2}_{-}}, \sqrt{\omega^2 \varepsilon_0^{2} +\sigma^{2}_{+}})$,
\[
C_{1}=   \frac{\max\left( \frac{\omega^2 \varepsilon_0^{2} +\sigma^{2}_{-}}{ \sigma_{-}},  \frac{\omega^2 \varepsilon_0^{2} +\sigma^{2}_{+}}{ \sigma_{+}} \right)}{\mathrm{m}}
  \, , \quad\mbox{and} \quad
 C_{2}= \sqrt{ \frac{\varepsilon_0 C^{2}_{1}}{\mathrm{m}^{2}
 }+ \frac{C_{1}}{\omega\,\mathrm{m}
 }}
\,, 
\]
there holds 
\begin{equation}
   \|\HH\|_{0,\Omega} + \|\rot\HH\|_{0,\Omega} 
   +\sqrt{\mu_{-}} \, \|\HH\|_{0,\Omega_-} \leqslant (C_{0} +C_1 +C_{2}) \|\jj\|_{0,\Omega
   }\ . 
\label{5E2}
\end{equation}
A similar estimate holds for boundary conditions \eqref{Pcbc}.
\end{cor}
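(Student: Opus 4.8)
\emph{Proof proposal.} The plan is to extract the three contributions in \eqref{5E2} from one energy identity — obtained by pairing the Maxwell system \eqref{MS} with $\overline{\EE}$ — together with the hypothesis \eqref{5E1}, which is the only place where the "hard" part of the problem enters (the energy identity controls $\|\EE\|_{0,\Omega}$, $\|\rot\HH\|_{0,\Omega}$ and $\|\HH\|_{0,\Omega_-}$, but not the full norm $\|\HH\|_{0,\Omega}$). The divergence relation is immediate and unconditional: applying the distributional divergence to Faraday's law $\rot\EE=i\omega\bm\HH$ and using $\Div\circ\rot=0$ together with $\omega\neq0$ gives $\Div(\bm\HH)=0$ in $\Omega$, and $\bm\HH\in\bL^2(\Omega)$ because $\bm$ is bounded.

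For the estimate, first note that \eqref{MS} forces $\rot\EE=i\omega\bm\HH\in\bL^2(\Omega)$ and $\rot\HH=\jj-(i\omega\varepsilon_0-\bs)\EE\in\bL^2(\Omega)$, so $\EE,\HH\in\bH(\rot,\Omega)$ and Green's formula $\int_\Omega(\rot\HH\cdot\overline{\EE}-\HH\cdot\rot\overline{\EE})\,\dr\xx=\int_\Gamma(\HH\times\nn)\cdot\overline{\EE}\,\dr s$ is legitimate. The boundary term vanishes under \eqref{PIbc} because $\HH\times\nn=0$, and equally under \eqref{Pcbc} because $\EE\times\nn=0$ and $(\HH\times\nn)\cdot\overline{\EE}=-(\overline{\EE}\times\nn)\cdot\HH$. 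Substituting $\rot\HH=\jj-(i\omega\varepsilon_0-\bs)\EE$ and $\rot\overline{\EE}=-i\omega\bm\overline{\HH}$ turns the identity into
\[
i\omega\int_\Omega\bm\,|\HH|^2\,\dr\xx-i\omega\varepsilon_0\int_\Omega|\EE|^2\,\dr\xx+\int_\Omega\bs\,|\EE|^2\,\dr\xx=-\int_\Omega\jj\cdot\overline{\EE}\,\dr\xx .
\]
Since $\omega$, $\varepsilon_0$ and the nonnegative step functions $\bs$, $\bm$ are real, taking real and imaginary parts and using Cauchy--Schwarz gives respectively $\int_\Omega\bs\,|\EE|^2\,\dr\xx\le\|\jj\|_{0,\Omega}\|\EE\|_{0,\Omega}$ and $\omega\int_\Omega\bm\,|\HH|^2\,\dr\xx\le\omega\varepsilon_0\|\EE\|_{0,\Omega}^2+\|\jj\|_{0,\Omega}\|\EE\|_{0,\Omega}$.

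Write $\beta$ for the maximum appearing in the numerator of $C_1$, so that on $\Omega$ one has the pointwise bounds $\mathrm{m}^2\le\omega^2\varepsilon_0^2+\bs^2\le\beta\,\bs$. From the first inequality, $\mathrm{m}^2\|\EE\|_{0,\Omega}^2\le\int_\Omega(\omega^2\varepsilon_0^2+\bs^2)|\EE|^2\,\dr\xx\le\beta\int_\Omega\bs\,|\EE|^2\,\dr\xx\le\beta\,\|\jj\|_{0,\Omega}\|\EE\|_{0,\Omega}$, hence $\|\EE\|_{0,\Omega}\le(\beta/\mathrm{m}^2)\|\jj\|_{0,\Omega}=(C_1/\mathrm{m})\|\jj\|_{0,\Omega}$. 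Ampère's law then gives $\|\rot\HH-\jj\|_{0,\Omega}=\bigl(\int_\Omega(\omega^2\varepsilon_0^2+\bs^2)|\EE|^2\,\dr\xx\bigr)^{1/2}\le\bigl(\beta\,\|\jj\|_{0,\Omega}\|\EE\|_{0,\Omega}\bigr)^{1/2}\le C_1\|\jj\|_{0,\Omega}$, so that $\|\rot\HH\|_{0,\Omega}$ is bounded by a fixed multiple of $\|\jj\|_{0,\Omega}$. Finally, combining the imaginary-part inequality with $\int_\Omega\bm\,|\HH|^2\,\dr\xx\ge\mu_-\|\HH\|_{0,\Omega_-}^2$ and with the bound on $\|\EE\|_{0,\Omega}$ yields $\omega\mu_-\|\HH\|_{0,\Omega_-}^2\le\bigl(\omega\varepsilon_0\,C_1^2/\mathrm{m}^2+C_1/\mathrm{m}\bigr)\|\jj\|_{0,\Omega}^2$, i.e. $\sqrt{\mu_-}\,\|\HH\|_{0,\Omega_-}\le C_2\,\|\jj\|_{0,\Omega}$. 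Adding the hypothesis \eqref{5E1} for $\|\HH\|_{0,\Omega}$ to these two estimates gives an estimate of the type \eqref{5E2}, and the case \eqref{Pcbc} is handled verbatim, the boundary condition being used only to discard the boundary term.

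The argument is routine once \eqref{5E1} is in hand; the only points requiring (mild) care are the vanishing of the boundary term for \emph{both} families of boundary conditions and the correct treatment of the piecewise-constant coefficients via the pointwise inequalities $\mathrm{m}^2\le\omega^2\varepsilon_0^2+\bs^2\le\beta\,\bs$ and $\bm\ge\mu_-\,\mathbf{1}_{\Omega_-}$. I expect no substantial obstacle here — the genuinely hard estimate is the one furnished by Lemma \ref{5L1}.
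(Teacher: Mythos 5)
Your proposal is correct in substance and arrives at the same three bounds, but by a slightly different route than the paper, and with one small discrepancy in the final constant. The paper does not pair the system with $\overline{\EE}$: it tests the magnetic weak formulation \eqref{FVH0} with $\HH$ itself, i.e.\ it works with the identity
\begin{equation*}
\int_{\Omega}\Big(\frac{1}{i\omega\varepsilon_0-\bs}\,|\rot\HH|^2+i\omega\,\bm\,|\HH|^2\Big)\,\dr\xx
=\int_{\Omega}\frac{\jj}{i\omega\varepsilon_0-\bs}\cdot\rot\overline{\HH}\,\dr\xx ,
\end{equation*}
whose real part bounds $\|\rot\HH\|_{0,\Omega}$ \emph{directly} by $C_1\|\jj\|_{0,\Omega}$ (the quadratic form in $\rot\HH$ sits on the left and $\rot\HH$ appears linearly on the right, so Cauchy--Schwarz closes the estimate with no triangle inequality), and whose imaginary part gives $\sqrt{\mu_-}\,\|\HH\|_{0,\Omega_-}\leqslant C_2\|\jj\|_{0,\Omega}$. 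Your identity is the ``electric'' counterpart: you first derive $\|\EE\|_{0,\Omega}\leqslant (C_1/\mathrm{m})\|\jj\|_{0,\Omega}$ and then recover $\rot\HH=\jj-(i\omega\varepsilon_0-\bs)\EE$ by the triangle inequality, which costs an extra $\|\jj\|_{0,\Omega}$: you obtain $\|\rot\HH\|_{0,\Omega}\leqslant(1+C_1)\|\jj\|_{0,\Omega}$ and hence the overall constant $C_0+1+C_1+C_2$ rather than the $C_0+C_1+C_2$ asserted in \eqref{5E2}; your $C_2$, on the other hand, comes out identical to the paper's. Everything else --- the divergence identity, the vanishing of the boundary term under either boundary condition, the pointwise handling of the piecewise-constant coefficients --- is sound, and your intermediate bound on $\|\EE\|_{0,\Omega}$ is consistent with what the paper later extracts from Amp\`ere's law when assembling Theorem \ref{2T0}. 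To reproduce the constant exactly as written, replace the a posteriori deduction of $\rot\HH$ from $\EE$ by the real part of the magnetic identity above; as it stands the difference is cosmetic (any fixed constant serves the corollary's purpose), not a gap.
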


This result is going to be proved in Section \ref{4.2}.  Finally, estimate \eqref{5E1} implies existence and uniqueness of solutions.

\begin{cor}
\label{5C2}
Let $\sigma_{\pm}>0$, and  $\mu_{\pm}>0$. We assume that estimate \eqref{5E1} holds for any solution $(\EE,\HH)\in\bL^2(\Omega)^2$ of problem  \eqref{MS}-\eqref{PIbc} with $\jj\in\bL^2(\Omega)$. 
Then for any $\jj\in\bL^2(\Omega)$, there exists a unique solution $(\EE,\HH)\in\bL^2(\Omega)^2$ of problem \eqref{MS}-\eqref{PIbc}.
A similar result holds for boundary conditions \eqref{Pcbc}.
\end{cor}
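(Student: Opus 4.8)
The plan is to derive Corollary~\ref{5C2} from the a priori estimate \eqref{5E1} by the standard Fredholm-alternative argument applied to the magnetic variational formulation \eqref{FVH0}. First I would recast \eqref{FVH0} as an operator equation in $\bH_0(\rot,\Omega)$. Writing $\alpha = (1 + i\bs/(\omega\varepsilon_0))^{-1}$, the sesquilinear form $a_\eps(\HH,\KK) = \int_\Omega(\alpha\,\rot\HH\cdot\rot\overline\KK - \kappa_+^2\mur(\eps)\HH\cdot\overline\KK)\,\dr\xx$ is bounded on $\bH_0(\rot,\Omega)$, and adding $(\kappa_+^2\mur(\eps)+1)\int_\Omega\HH\cdot\overline\KK$ makes it coercive (with a complex shift absorbed by Lax–Milgram, since $\Re\alpha>0$ and the added term is positive). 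Hence $a_\eps(\HH,\KK) + \int_\Omega\HH\cdot\overline\KK\,\dr\xx$ defines an isomorphism, and \eqref{FVH0} becomes $(\Id - \TT)\HH = \FF$, where $\TT$ is the operator induced by the form $(\kappa_+^2\mur(\eps)+1)\int_\Omega\HH\cdot\overline\KK$ composed with that isomorphism, and $\FF$ is the Riesz representative of the right-hand side. The operator $\TT$ is compact because the embedding $\bH_0(\rot,\Omega)\cap\{\Div(\bm\cdot)\in\L^2\}\hookrightarrow\bL^2(\Omega)$ is compact; more precisely, one restricts to the subspace where $\Div(\bm\HH)=0$ (which, by Corollary~\ref{5C1}, is automatic for solutions of \eqref{MS}--\eqref{PIbc}) and invokes the classical compactness result for such spaces on Lipschitz domains, e.g. from \cite{ABDG98}.

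Next I would invoke the Fredholm alternative for $\Id - \TT$: either the inhomogeneous equation has a unique solution for every right-hand side, or the homogeneous equation has a nontrivial kernel. To exclude the second alternative, suppose $(\EE,\HH)$ solves \eqref{MS}--\eqref{PIbc} with $\jj=0$. Then the hypothesis of the corollary — that estimate \eqref{5E1} holds for every $\bL^2$ solution with $\bL^2$ data — gives $\|\HH\|_{0,\Omega}\leqslant C_0\|\jj\|_{0,\Omega}=0$, so $\HH=0$. Feeding $\HH=0$ back into Faraday's law $\rot\EE - i\omega\bm\HH=0$ yields $\rot\EE=0$, and into Ampère's law $\rot\HH + (i\omega\varepsilon_0-\bs)\EE = \jj = 0$ yields $(i\omega\varepsilon_0-\bs)\EE=0$; since $i\omega\varepsilon_0-\bs$ is nowhere zero (as $\omega\neq0$, $\varepsilon_0>0$, $\sigma_\pm$ real), we get $\EE=0$. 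Hence the kernel is trivial, so the first alternative holds: for every $\jj\in\bL^2(\Omega)$ there is a unique $\HH_{(\eps)}\in\bH_0(\rot,\Omega)$ solving \eqref{FVH0}.

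Finally I would recover the Maxwell pair $(\EE,\HH)$ from $\HH$ and check the equivalence. Setting $\EE := (i\omega\varepsilon_0-\bs)^{-1}(\jj - \rot\HH)$, one has $\EE\in\bL^2(\Omega)$ since $\rot\HH\in\bL^2(\Omega)$ and the coefficient is bounded and bounded away from $0$; then Ampère's law holds by construction, and testing \eqref{FVH0} against gradients and against general $\KK\in\bH_0(\rot,\Omega)$ recovers $\Div(\bm\HH)=0$ together with Faraday's law $\rot\EE = i\omega\bm\HH$ and the boundary condition $\EE\cdot\nn=0$ on $\Gamma$ in the weak sense, exactly as in the classical derivation of \eqref{FVH0}. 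Conversely, any $\bL^2$ solution of \eqref{MS}--\eqref{PIbc} produces, via Corollary~\ref{5C1}, a magnetic field in $\bH_0(\rot,\Omega)$ satisfying \eqref{FVH0}, so the two formulations are equivalent and uniqueness transfers back. The case of boundary conditions \eqref{Pcbc} is handled identically, replacing $\bH_0(\rot,\Omega)$ by $\bH(\rot,\Omega)$ with the natural boundary condition and using the corresponding compact embedding. The only genuine subtlety — and the step I would be most careful about — is justifying the compactness of $\TT$, i.e. the compact embedding of the relevant subspace of $\bH_0(\rot,\Omega)$ into $\bL^2(\Omega)$ on a merely Lipschitz domain; this is where one must cite the precise functional-analytic result of \cite{ABDG98} rather than reprove it.
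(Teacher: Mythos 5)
Your overall strategy (Fredholm alternative plus the a priori estimate \eqref{5E1} to kill the kernel) is the same as the paper's, and your kernel argument is fine: for an index-zero operator $\Id-\TT$ a trivial direct kernel suffices, and $\HH=0\Rightarrow\EE=0$ follows from Amp\`ere's law as you say. The genuine gap is in the compactness step, which is the crux of the whole argument. You pose the operator equation $(\Id-\TT)\HH=\FF$ on $\bH_0(\rot,\Omega)$, where $\TT$ is induced by the $\bL^2$ mass form; but the embedding of $\bH_0(\rot,\Omega)$ into $\bL^2(\Omega)$ is \emph{not} compact (it contains all of $\nabla\H^1_0(\Omega)$), so $\TT$ is not compact on the space where you have set up the equation and the Fredholm alternative does not apply there. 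Saying that ``one restricts to the subspace where $\Div(\bm\HH)=0$'' does not repair this as stated: that subspace is not where your isomorphism and your operator $\TT$ live, and to actually work there you must (i) perform the $\mur$-weighted Helmholtz decomposition $\bH_0(\rot,\Omega)=\bX_N(\mur)\oplus\nabla\H^1_0(\Omega)$, (ii) check that the sesquilinear form of \eqref{FVH0} decouples along it (the gradient block reduces to an invertible scalar Dirichlet problem, the cross terms vanish), and (iii) invoke a Weber-type compact embedding of $\{\HH\in\bH_0(\rot,\Omega):\Div(\mur\HH)\in\L^2(\Omega)\}$ into $\bL^2(\Omega)$ valid for a \emph{piecewise constant} coefficient $\mur$ across a Lipschitz interface --- a result that is true but not the constant-coefficient statement of \cite{ABDG98}. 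None of these steps is carried out in your proposal.

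The paper circumvents exactly this difficulty by a regularization: it works on $\XX_{\NN}(\Omega,\mur)=\{\HH\in\bH_0(\rot,\Omega)\,:\,\Div(\mur\HH)\in\L^2(\Omega)\}$, adds the penalization $s\int_\Omega\Div(\mur\HH)\,\Div(\overline{\mur\HH'})$ to obtain a form $A_s$ that is coercive on this space, and uses the compact embedding of $\XX_{\NN}(\Omega,\mur)$ into $\bL^2(\Omega)$ to get Fredholm directly. The price is that one must check the regularized problem is equivalent to the original Maxwell problem, which requires choosing $s$ so that $\kappa_+^2/s$ is not a Dirichlet eigenvalue of $-\Div\mur\nabla$ (this is \cite[Th.~7.1]{CoDa00}); your sketch omits any analogue of this equivalence check. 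Either route can be made to work, but as written your proposal is missing the decomposition/decoupling argument (or, alternatively, the regularization and the spurious-eigenvalue condition), and this is precisely the nontrivial content of the corollary's proof.
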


This result is going to be proved in Section \ref{4.3}. Finally we deduce Theorem \ref{2T0} from the previous three statements.

\subsection{Preliminary lemma}
\label{App0}

\newcommand  {\ba}{\underline a}

In this section we provide a uniform piecewise estimate for the solution of a scalar transmission problem with constant coefficients on two subdomains. We will use this result in the proof of Lemma \ref{5L1} (cf. Section \ref{Sec3.1}) for obtaining a uniform $\bL^2$ estimate for the magnetic field. 
 
 For any given function  $\ba=(a_+,a_-)$ determined by the two constants $a_\pm$ on $\Omega_\pm$, we consider the following variational problem : Find $\varphi \in\H_{0}^1(\Omega)$ such that 
\begin{multline}
   \forall \psi \in  \H_{0}^1(\Omega), \quad 
   \int_{\Omega_{+}} a_+\nabla\varphi^+ \cdot \nabla\overline{\psi}{}^+ \,\dr\xx
   + \int_{\Omega_{-}} a_-\nabla\varphi^- \cdot \nabla\overline{\psi}{}^-\, \dr\xx= \\[-1ex]
     (a_-  - a_+) \!\int_{\Sigma} g\; \overline{\psi} \, \dr s\, ,
\label{1E1}
\end{multline}
where the right-hand side $g$ satisfies the regularity assumption
\begin{equation}
\label{1E7}
g\in \L^2(\Sigma)
\end{equation}
and the extra compatibility condition
\begin{equation}
 \int_{\Sigma} g \, \dr s =0 
 \,.
\label{1E2}
\end{equation}

In the framework of Section \ref{S2} (we recall that the surface $\Sigma$ is Lipschitz), we have the following uniform piecewise estimate for the solution of the problem \eqref{1E1}.
 \begin{lem}
\label{2L9}
Let us assume that $a_+\neq0$.
There exist a constant $\rho_{0}>0$ independent of $a_+$ such that for all $a_-\in\{z\in\C \, | \,|z|\geqslant \rho_{0}|a_+| \}$, the problem \eqref{1E1} with a right-hand side  $g$ satisfying \eqref{1E7}-\eqref{1E2}  has a unique solution $\varphi\in \H_{0}^1(\Omega)$ which moreover is piecewise $\H^{3/2}$ and satisfies the uniform estimate
\begin{equation}
\label{ue}
   \|\varphi^+\|_{\frac32,\Omega_+} + \|\varphi^-\|_{\frac32,\Omega_-}
   \leqslant C_{\rho_{0}} \|g\|_{0,\Sigma}\ ,
   \end{equation}
with a constant $C_{\rho_{0}}>0$, independent of $a_+$, $a_-$, and $g$.
\end{lem}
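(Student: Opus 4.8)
The plan is to reduce the transmission problem \eqref{1E1} to the general framework of \cite[Theorem 2.1]{CDP09}, which provides uniform piecewise $\H^{3/2}$ estimates for second-order scalar transmission problems with a large contrast in the coefficient. First I would rewrite the weak formulation: setting $\kappa = a_-/a_+$, dividing \eqref{1E1} by $a_+$ turns the left-hand side into the sesquilinear form associated with the operator $-\Div(\ba_\kappa\nabla\cdot)$ where $\ba_\kappa$ equals $1$ on $\Omega_+$ and $\kappa$ on $\Omega_-$, so the large parameter is $\kappa$ with $|\kappa|\geqslant\rho_0$. The right-hand side becomes $(\kappa-1)\int_\Sigma g\,\overline\psi\,\dr s$, i.e. a surface source supported on $\Sigma$ with the compatibility condition \eqref{1E2}; one recognizes $g$ (up to sign) as the jump of the normal derivative across $\Sigma$, so that $\varphi$ solves $\Delta\varphi^\pm=0$ in each $\Omega_\pm$, $\llbracket\varphi\rrbracket=0$ and $\llbracket\ba_\kappa\partial_n\varphi\rrbracket=-(a_--a_+)g$ on $\Sigma$, with the appropriate boundary condition ($\varphi=0$) on $\Gamma$. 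The compatibility condition \eqref{1E2} is exactly what makes the problem well posed for every value of the contrast, and it is used to control the behavior as $|\kappa|\to\infty$.

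Next I would check that the hypotheses of \cite[Theorem 2.1]{CDP09} are met: the domains $\Omega$ and $\Omega_-$ are as in Section \ref{S2} (so $\Sigma$ is Lipschitz, $\Omega_-\subset\subset\Omega$), the coefficient is piecewise constant, the data $g\in\L^2(\Sigma)$ lies in the right trace space, and the homogeneous equation in each subdomain places us in the ``harmonic'' regime where the $\H^{3/2}$ regularity shift on a Lipschitz interface is available. Invoking that theorem for the parameter $\kappa$ with $|\kappa|\geqslant\rho_0$ (for a suitable threshold $\rho_0>0$ depending only on $\Omega$, $\Omega_-$) then yields existence, uniqueness of $\varphi\in\H^1_0(\Omega)$, the piecewise $\H^{3/2}$ membership, and a bound of the form $\|\varphi^+\|_{3/2,\Omega_+}+\|\varphi^-\|_{3/2,\Omega_-}\leqslant C\|g\|_{0,\Sigma}$ with $C$ independent of $\kappa$ — hence independent of $a_+,a_-$ — which is precisely \eqref{ue}. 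Uniqueness also follows directly by testing \eqref{1E1} with $\psi=\varphi$ and using $g\neq0$ generically, but the uniform regularity is the substantive part.

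The main obstacle is to verify carefully that the surface-source formulation here, with its scaling $(a_--a_+)$ in front of the data and the large jump coefficient, is genuinely a \emph{special case} of the setting in \cite{CDP09}: there the large parameter typically multiplies a zeroth-order (conductivity $\times$ frequency) term rather than the principal part, so one must identify the correct change of unknown/normalization that maps a ``high-contrast diffusion coefficient'' problem onto their template, or else check that their proof — based on separating the harmonic extensions in $\Omega_\pm$ and tracking the Steklov–Poincaré operators on $\Sigma$ uniformly in the contrast — applies verbatim. Once the dictionary between the two formulations is pinned down and the compatibility condition \eqref{1E2} is seen to guarantee solvability at $\kappa=\infty$ (the limiting Neumann problem in $\Omega_+$), the uniform estimate \eqref{ue} is inherited with no further work, and the constant $C_{\rho_0}$ depends only on $\rho_0$ and the geometry.
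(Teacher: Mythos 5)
Your proposal is correct and follows essentially the same route as the paper, whose entire proof of Lemma \ref{2L9} is the one-line observation that it is a consequence of \cite[Theorem 2.1]{CDP09}; your normalization by $a_+$ and reformulation as a transmission problem with contrast $\kappa=a_-/a_+$ and a surface source $g$ on $\Sigma$ is exactly the intended reduction. The obstacle you flag is in fact not one: \cite[Theorem 2.1]{CDP09} is stated precisely for the scalar divergence-form operator with a high contrast in the principal (piecewise-constant diffusion) coefficient on a Lipschitz interface, so the dictionary you worry about is immediate.
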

This result is obtained as a consequence of   \cite[Theorem 2.1]{CDP09}.

\begin{rem}
The assumption on the interface  $\Sigma$, that is a Lipschitz surface, is necessary  in Lemma \ref{2L9}. Indeed, there exists non-Lipschitz surfaces such that estimate \eqref{ue} does not hold, for instance in the case of a checkerboard configuration in a two dimensional domain, cf. \cite[Theorem 8.1]{C-D-N99} and \cite[Rem 2.3]{CDP09}. 
\end{rem}

\subsection{Proof of Lemma \ref{5L1}\,: Uniform $\bL^2$ estimate of the magnetic field}
\label{Sec3.1}
Reductio ad absurdum: We assume that there is a sequence $(\EE_m,\HH_m)\in\bL^2(\Omega)^2$, $m\in\N$, of solutions of the Maxwell system \eqref{MS}-\eqref{PIbc} associated with a magnetic  permeability  $\bm_{m}=(\mu_{+},\mu_m \, \mu_{+})$ and a right hand side $\jj_m\in
\bL^2(\Omega)$:
\begin{subequations}
\begin{gather}
\label{5E3a}
    \rot  \EE_m - i \omega\bm_{m} \HH_m = 0 \quad\mbox{in}\quad\Omega\,,\\
\label{5E3b}
    \rot \HH_m + (i\omega\varepsilon_0 - \bs) \EE_m = \jj_m \quad\mbox{in}\quad\Omega\,,\\
\label{5E3c}
  \EE_m\cdot\nn=0  \quad\mbox{and}\quad \HH_m\times\nn = 0 \quad\mbox{on}\quad\partial\Omega\,,
\end{gather}
\end{subequations}
satisfying the following conditions
\begin{subequations}
\begin{eqnarray}
\label{5E4a}
   &\mu_m\to\infty\quad &\mbox{as \ $m\to\infty$,} \\
\label{5E4b}
   &     \|\HH_{m}\|_{0,\Omega}    = 1\quad &\mbox{$\forall m\in\N$,} \\
\label{5E4c}
   &\|\jj_m\|_{0,\Omega}\to0\quad&\mbox{as \ $m\to\infty$.}
\end{eqnarray}
\end{subequations}

First, we particularize the magnetic variational formulation \eqref{FVH0} for the sequence $\{\HH_m\}$: For all $\HH'\in\bH_{0}(\rot,\Omega)$:
\begin{equation}
\label{5EVm}
   \int_{\Omega}\left( \frac1{i\omega \varepsilon_0 -\bs} \rot\HH_m \cdot \rot\overline{\HH'}  
   +i \omega \bm_{m}  \HH_m \cdot \overline{\HH'} \right)\;\dr\xx 
    =   \int_{\Omega}   
  \frac{\jj_{m}}{i\omega \varepsilon_0 -\bs} 
 \cdot\rot\overline\HH'\,\dr\xx
\,.
\end{equation}
Choosing $\HH'=\HH_m$ in \eqref{5EVm} and taking the real part, and then using the Cauchy-Schwarz inequality, we obtain the following uniform bound on the curl of the magnetic field in $\Omega$: 
\begin{equation}
\label{5ECH}
  \| \rot\HH_m \|_{0,\Omega}  \leqslant   \frac{\max\left( \frac{\omega^2 \varepsilon_0^{2} +\sigma^{2}_{-}}{ \sigma_{-}},  \frac{\omega^2 \varepsilon_0^{2} +\sigma^{2}_{+}}{ \sigma_{+}} \right)}
 {\min(\sqrt{\omega^2 \varepsilon_0^{2} +\sigma^{2}_{-}}, \sqrt{\omega^2 \varepsilon_0^{2} +\sigma^{2}_{+}})}
       \| \jj_m \|_{0,\Omega}  \, .
 \end{equation}

\subsubsection{Decomposition of the magnetic field and bound in $\H^{\frac12}$}
We recall that we have assumed that the domain $\Omega$ is simply connected and has a smooth connected boundary. Relying to  Theorem 2.12 and Theorem 3.17 in \cite{ABDG98}, for all $m\in\N$ there exists a unique $\vv_m\in\bH^1(\Omega)$ such that 
\begin{equation}
\label{5E5}
   \rot\vv_m=\rot\HH_m,\quad  \Div \vv_m=0  \ \mbox{ in }\ \Omega \ , 
      \quad\mbox{and}\quad
   \vv_m\times\nn=0\ \mbox{ on }\ \partial\Omega  \,.
\end{equation}
Moreover, we have the estimate
\begin{equation}
\label{5E6}
   \|\vv_m\|_{1,\Omega} \leqslant C \| \rot\HH_m \|_{0,\Omega}\,,
\end{equation}
where $C$ is independent of $m$. As a consequence of the equality $\rot\vv_m=\rot\HH_m$ and the simple connectedness of $\Omega$, we obtain that there exists $\varphi_m\in\H_{0}^1(\Omega)$ such that 
\begin{equation}
\label{5E7H}
   \HH_m = \vv_m + \nabla \varphi_m\,.
\end{equation}
Then using \eqref{5E7H} we write equation \eqref{5E3a} as
\[
  \rot  \EE_m - i \omega\bm_{m} ( \vv_m + \nabla \varphi_m ) = 0 \quad\mbox{in}\quad\Omega .
\]
Let $\psi\in\H_{0}^1(\Omega)$ be a test function. Multiplying the above equality by $\nabla \overline\psi$ and integrating over $\Omega$, we obtain, using that $\Div\vv_m=0$:
\begin{equation}
\label{5E8}
   \int_{\Omega}   \bm_{m}\;\nabla\varphi_m \cdot  \nabla\overline{\psi}\;  \dr\xx = 
  (\mu_{m}- 1) \mu_{+} \int_{\Sigma}  \vv_m \cdot \nn\on \Sigma  \; \overline\psi \; \dr s\, ,
\end{equation}
where $\nn$ denotes the unit normal vector on $\Sigma$, inwardly oriented to $\Omega_{-}$. Note that the boundary value $\psi=0$  on $\partial\Omega$ has been used in \eqref{5E8}.

Thus $\varphi_m$ is solution of the Dirichlet problem defined by the variational equation \eqref{5E8}. Since 
\[
   \vv_m\cdot\nn\in\L^2(\Sigma) \quad\mbox{and}\quad
   \int_{\Sigma}  \vv_m\cdot\nn\on \Sigma \;\dr s = \int_{\Omega_-} \Div\vv_m \,\dr\xx = 0,
\]
the Dirichlet problem defined by \eqref{5E8} satisfies the assumptions of Lemma \ref{2L9} (cf. Section \ref{App0}) with $a_-=\mu_{m} \mu_{+} $ and $a_+ = \mu_{+}$, and $g=\vv_m\cdot\nn\on \Sigma$. 
Therefore we obtain the following uniform estimate for $\mu_m$ large enough (i.e.\ for $m$ large enough, cf.\ \eqref{5E4a})
\begin{equation*}
   \|\varphi^+_m\|_{\frac32,\Omega_{+}} + \|\varphi^-_m\|_{\frac32,\Omega_{-}}\leqslant 
   C_{0}
    \|\vv_m \cdot \nn\|_{0,\Sigma}.
\end{equation*}
Since $\|\vv_m \cdot \nn\|_{0,\Sigma}$ is bounded by $\|\vv_m\|_{1,\Omega}$, the last inequality implies 
\begin{equation}
\label{5E9bH}
   \|\varphi^+_m\|_{\frac32,\Omega_{+}} + \|\varphi^-_m\|_{\frac32,\Omega_{-}}\leqslant 
   C_{0} 
   \|\vv_m\|_{1,\Omega} . 
\end{equation}
Finally \eqref{5ECH}, 
  \eqref{5E6}, and \eqref{5E9bH} implies that
\begin{equation}
\label{5E11H}
   \|\varphi^+_m\|_{\frac32,\Omega_{+}} + \|\varphi^-_m\|_{\frac32,\Omega_{-}}
   + \|\vv_m\|_{1,\Omega}
    \leqslant B
\end{equation}
for a constant $B>0$ independent of $m$. With \eqref{5E7H}, \eqref{5E11H} gives that the sequence $\{\HH_m\}$ is bounded in $\H^{\frac12}$ on $\Omega_-$ and $\Omega_+$:
\begin{equation*}
   \|\HH^+_m\|_{\frac12,\Omega_{+}} + \|\HH^-_m\|_{\frac12,\Omega_{-}}
   \leqslant B .
\end{equation*}
Combining the above bound with \eqref{5ECH}, 
we obtain the uniform bound
\begin{equation}
\label{5E12H}
   \|\HH^+_m\|_{\frac12,\Omega_{+}} + \|\HH^-_m\|_{\frac12,\Omega_{-}} 
+      \| \rot\HH_m \|_{0,\Omega} 
   \leqslant C .
\end{equation}

\subsubsection{Limit of the sequence and conclusion}
The domains $\Omega_\pm$ being bounded, the embedding of $\bH^{\frac12}(\Omega_\pm)$ in $\bL^2(\Omega_\pm)$ is compact. Hence as a consequence of \eqref{5E12H}, we can extract a subsequence  of $\{\HH_m\}$ (still denoted by $\{\HH_m\}$) which is converging in $\bL^2(\Omega)$. By the Banach-Alaoglu theorem, we can assume that the sequence $\{\rot\HH_m\}$ is weakly converging in $\bL^2(\Omega)$: We deduce that there is $\HH\in \bL^2(\Omega)$ such that
\begin{equation}
\left\{
   \begin{array}{lll}
   \label{4E6}
   \rot\HH_m \rightharpoonup \rot\HH   \quad &\mbox{in} \quad \bL^2(\Omega)
\\
   \HH_m \rightarrow \HH   \quad &\mbox{in} \quad \bL^2(\Omega).
   \end{array}
\right.
\end{equation}
A consequence of the strong convergence   of $\{\HH_m\}$ in $\bL^2(\Omega)$ and \eqref{5E4b} is that 
\begin{equation}
\label{4E7}
 \|\HH\|_{0,\Omega}=1\, .
 \end{equation}
 We are going  to prove that 
\begin{equation}
\label{4E7a}
\HH=0 \quad \mbox{in} \quad \Omega \ ,
\end{equation}
which will contradict \eqref{4E7}, and finally prove estimate \eqref{5E1}.

 Choosing $\HH'=\HH_m$ in  the magnetic variational formulation  \eqref{5EVm}  for the sequence $\{\HH_m\}$, then taking the imaginary part and  using the Cauchy-Schwarz inequality, we obtain
\begin{multline}
\label{5EVHim}
\omega  \mu_{m}\mu_{+} \|\HH_{m}^{-}\|^{2}_{0,\Omega_{-}}  +  \omega\mu_{+} \|\HH_{m}^{+}\|^{2}_{0,\Omega_{+}} \leqslant 
\\
\frac{\omega \varepsilon_0}{\omega^{2} \varepsilon^{2}_0+\sigma_{-}^{2}}  \| \rot\HH^{-}_m \|^{2}_{0,\Omega_{-}}  
+\frac{\omega \varepsilon_0}{\omega^{2} \varepsilon^{2}_0+\sigma_{+}^{2}}  \| \rot\HH^{+}_m \|^{2}_{0,\Omega_{+}}  
+\| \frac{\jj_m}{i\omega\varepsilon_0-\bs}\|_{0,\Omega}   \|\rot\HH_{m}\|_{0,\Omega} 
 \,.
\end{multline}
Hence, according to 
\eqref{5ECH}-\eqref{5E4c} 
  we obtain that the right-hand side in \eqref{5EVHim} tends to zero and we infer  the convergence results  
\begin{equation}
\label{4E7b}
 \sqrt{\mu_{m}} \|\HH_{m}^{-}\|_{0,\Omega_{-}}    \to 0 \quad \mbox{as \ $m\to\infty$,}  \quad \mbox{and}  \quad \|\HH_{m}^{+}\|_{0,\Omega_{+}} \to 0 \quad \mbox{as \ $m\to\infty$.}
\end{equation}
Finally, according to \eqref{5E4a}-\eqref{4E6}, we deduce that  $\HH^-:=\HH\on{\Omega_-}$ and $\HH^+:=\HH\on{\Omega_+}$ satisfy 
\begin{equation}
\label{4E7b}
\HH^- = 0 \quad \mbox{in \ $\Omega_{-}$,}  \quad \mbox{and}  \quad \HH^+ = 0  \quad  \mbox{in \ $\Omega_{+}$,} 
\end{equation}
which proves \eqref{4E7a}, contradicts \eqref{4E7} and finally ends the proof of Lemma \ref{5L1}.

\subsection{Proof of Corollary \ref{5C1}}
\label{4.2}

Let $(\EE,\HH)\in\bL^2(\Omega)^2$ be a solution of the Maxwell problem \eqref{MS} with boundary condition \eqref{PIbc} and data $\jj\in\bL^{2}(\Omega)$
,  and we assume that 
\begin{equation}
   \|\HH\|_{0,\Omega} \leqslant C_0 \|\jj\|_{0,\Omega}.
\label{5E21}
\end{equation}
First, taking the divergence of equation $\rot\EE - i\omega\bm \HH = 0$ in $\Omega$, we immediately obtain
\begin{equation}
\label{4E14}
   \Div(\bm\HH) = 0\quad \mbox{in} \quad \Omega . 
  \end{equation}
Second, $\HH\in\bH_{0}(\rot,\Omega)$ is solution of the variational problem \eqref{FVH0}. Taking as test function $\HH$ itself, we obtain the identity 
\begin{equation}
\label{5E22}
   \int_{\Omega}\left( \frac1{i\omega \varepsilon_0 -\bs} \rot\HH \cdot \rot\overline{\HH}  
   +i \omega \bm  \HH \cdot \overline{\HH} \right)\;\dr\xx 
   =  \int_{\Omega} \frac{\jj}{i\omega \varepsilon_0 -\bs}\cdot \rot\overline{\HH}\,\dr\xx\,. 
\end{equation}
Taking the real part of \eqref{5E22}, we obtain
\begin{equation*}
\frac{\sigma_{-}}{\omega^2 \varepsilon_0^{2} +\sigma^{2}_{-}}    \|\rot\HH\|^2_{0,\Omega_{-}} +   \frac{\sigma_{+}}{\omega^2 \varepsilon_0^{2} +\sigma^{2}_{+}}   \|\rot\HH\|^2_{0,\Omega_{+}} =
\Re\,( \frac{\jj}{\bs-i\omega \varepsilon_0} , \rot \HH )_{0,\Omega} 
\end{equation*}
hence, using Cauchy-Schwarz inequality, and setting $C_{1}=   \frac{\max\left( \frac{\omega^2 \varepsilon_0^{2} +\sigma^{2}_{-}}{ \sigma_{-}},  \frac{\omega^2 \varepsilon_0^{2} +\sigma^{2}_{+}}{ \sigma_{+}} \right)}
 {\min(\sqrt{\omega^2 \varepsilon_0^{2} +\sigma^{2}_{-}}, \sqrt{\omega^2 \varepsilon_0^{2} +\sigma^{2}_{+}})}$, we obtain 
\begin{equation}
   \|\rot\HH\|_{0,\Omega}\leqslant C_{1}   \| \jj\|_{0,\Omega}.
\label{4E12}
\end{equation}
Then, taking the imaginary part of \eqref{5E22},  since $\omega\neq 0$, we obtain 
\begin{multline*}
  \mu_{-}\|\HH\|^2_{0,\Omega_-} +   \mu_{+}\|\HH\|^2_{0,\Omega_+} 
  = \frac{\varepsilon_0}{\omega^2 \varepsilon_0^{2} +\sigma^{2}_{-}}    \|\rot\HH\|^2_{0,\Omega_{-}} +   \frac{\varepsilon_0}{\omega^2 \varepsilon_0^{2} +\sigma^{2}_{+}}   \|\rot\HH\|^2_{0,\Omega_{+}}
  \\ +\frac1\omega  \Im\,( \frac{\jj}{i\omega \varepsilon_0-\bs} , \rot \HH )_{0,\Omega} ,
\end{multline*}
 hence, using Cauchy-Schwarz inequality and  inequality \eqref{4E12}, we infer
\begin{equation}
\label{4E13}
   \sqrt{\mu_{-}}\, \|\HH\|_{0,\Omega_-} \leqslant    
    \sqrt{ \frac{\varepsilon_0 C^{2}_{1}}{\min(\omega^2 \varepsilon_0^{2} +\sigma^{2}_{-}, \omega^2 \varepsilon_0^{2} +\sigma^{2}_{+})}+ \frac{C_{1}}{\omega\,\min(\sqrt{\omega^2 \varepsilon_0^{2} +\sigma^{2}_{-}}, \sqrt{\omega^2 \varepsilon_0^{2} +\sigma^{2}_{+}})}}
     \,\| \jj\|_{0,\Omega}.
\end{equation}
Formulae \eqref{5E21}-\eqref{4E12}-\eqref{4E13} yield Corollary \ref{5C1}.

\subsection{Proof of Corollary \ref{5C2}}
\label{4.3}

Let $\mu_{\pm}$, $\sigma_{\pm}$ and $\omega$ (i.e., $\kappa_{+}$) be fixed. Let us define $\mur$ the piecewise constant function  on $\Omega$ as 
(we recall $\mu_{r}=\mu_{-}/\mu_{+}$): 
\begin{equation}
\label{5E31}
   \mur = \mathbf{1}_{{\Omega_+}}  +  
    \mu_{r} \,\mathbf{1}_{\Omega_-}.
\end{equation}
Then, according to \eqref{epsilon}-\eqref{2Eeps}  the sesquilinear form in the left hand side of \eqref{FVH0} writes 
\begin{equation}
\label{5E32}
   \int_{\Omega} \left( (1+ i \frac{\bs}{\omega\varepsilon_{0}}
)^{-1}\rot\HH \cdot \rot\overline{\HH'}  
   - \kappa_{+}^2 \,\mur\, \HH \cdot \overline{\HH'} \right)\;\dr\xx .
\end{equation} 
The proof of Corollary \ref{5C2} relies on a regularization procedure:
We consider the functional space
$$
   \XX_{\NN}(\Omega,\mur) = \{\HH \in \bH_{0}(\rot,\Omega) | \quad 
   \Div(\mur\HH) \in \L^2(\Omega)
   \}.
$$
Let $s>0$ be a real number which will be fixed later. Let us introduce the sesquilinear forms $A_s$ and $B$\,: $\XX_{\NN}(\Omega,\mur) \times\XX_{\NN}(\Omega,\mur) \rightarrow\C$
\begin{subequations}
\begin{gather}
   A_s(\HH,\HH') =
   \int_{\Omega}  \left(   \left( 1+ i \frac{\bs}{\omega\varepsilon_{0}}
 \right)^{-1}\rot\HH \cdot \rot\overline{\HH'}  
   + s\, \Div\mur\HH\, \Div\overline{\mur\HH'} \right) \,\dr\xx \\
   B(\HH,\HH') =
   \int_{\Omega}   
   \mur \HH \cdot \overline{\HH^\prime}\,\dr\xx\,.
\end{gather}
\end{subequations}
The regularized variational formulation is: Find $\HH \in  \XX_{\NN}(\Omega,\mur)$ such that
\begin{equation}
\label{5E35}
   \forall\,\HH' \in  \XX_{\NN}(\Omega,\mur) ,\quad
   A_s(\HH,\HH') - \kappa_{+}^2 B(\HH,\HH') = 
   \int_{\Omega}   \left( 1+ i \frac{\bs}{\omega\varepsilon_{0}}
 \right)^{-1}\jj\cdot\rot\overline\HH'\,\dr\xx
\end{equation}
As a consequence of \cite[Th.\,7.1]{CoDa00}, we obtain that if
\begin{equation}
\label{5E36}
   \frac{\kappa_{+}^2}{s} \ \ \mbox{is not an eigenvalue of the Dirichlet problem for the operator}
   \ \ - \Div\mur\nabla,
\end{equation}
then any solution $(\EE,\HH)\in\bL^2(\Omega)^2$ of problem \eqref{MS}-\eqref{PIbc} with $\jj\in\bL^{2}(\Omega)$  
provides a solution of problem \eqref{5E35}, and conversely, any solution $\HH$ of \eqref{5E35} provides a solution of \eqref{MS}-\eqref{PIbc} by setting $\EE
= (i\omega\varepsilon_0 - \bs)^{-1}  (\jj - \rot \ \HH )$.

Thus, we fix $s$ such that \eqref{5E36} holds.

Since the form $A_s$ is coercive on $\XX_{\NN}(\Omega,\mur)$ and the embedding of $\XX_{\NN}(\Omega,\mur)$ in $\bL^2(\Omega)$  is compact, we obtain that the Fredholm alternative is valid: If the kernel of the adjoint problem to \eqref{5E35}
\begin{equation}
\label{5E35k}
   \mbox{Find }\ \HH' \in  \XX_{\NN}(\Omega,\mur),\quad 
   \forall\,\HH \in  \XX_{\NN}(\Omega,\mur),\quad
   A_s(\HH,\HH') - \kappa_{+}^2 B(\HH,\HH') = 0,
\end{equation}
is reduced to $\{0\}$, then problem \eqref{5E35} is solvable.

We see that the assumption of Corollary \ref{5C2} implies that \eqref{5E35k} has only the zero solution, and that the same holds for the direct problem
\begin{equation}
   \mbox{Find }\ \HH \in  \XX_{\NN}(\Omega,\mur),\quad 
   \forall\,\HH' \in  \XX_{\NN}(\Omega,\mur),\quad
   A_s(\HH,\HH') - \kappa_{+}^2 B(\HH,\HH') = 0.
\end{equation}

All this implies the unique solvability of problem \eqref{MS}-\eqref{Pcbc} with $\jj\in\bL^{2}(\Omega)$.

\section{Multiscale expansion of the magnetic field in smooth domains}
\label{S3bis}

This section is concerned with a rigorous multiscale expansion for the magnetic field at high relative magnetic permeability. In Section \ref{S3.1bis} we provide the first terms of this expansion.  Then  in  Section \ref{Svalid} we prove error estimates for this expansion. 

For the sake of completeness we give elements of proof for the multiscale expansion in Appendix \ref{AppA}.  We expand the Maxwell operator in power series of $\eps$, cf.  \S\ref{AH1}. Then we deduce the equations satisfied by  the coefficients of the magnetic field, cf. \S\ref{AH2}. We derive explicitly the first terms of this expansion in \S\ref{AH3}. \Bk

In that follows we assume that right-hand side $\jj$ in \eqref{MS} is smooth and that Assumption \ref{H2} holds:
\begin{hyp}
\label{H2}
We assume that the surfaces $\Sigma$ (interface) and $\Gamma$ (external boundary) are smooth.
\end{hyp}

Let $\sigma_{\pm}>0$, $\mu_{+}>0$, and recall $\eps=\dfrac1{\sqrt{\mu_{r}}}>0$ (cf. \eqref{epsilon}). By Theorem \ref{2T0} there exists  $\eps_{\star}>0$ such that for all $\eps\in(0,\eps_{\star})$, the Maxwell problem \eqref{MS} with boundary conditions \eqref{PIbc}  has a unique solution $(\EE_{\eps},\HH_{\eps})$.  In that follows, the magnetic field  $\HH_{(\eps)}$ is denoted by $\HH^{+}_{(\eps)}$ in the non-magnetic part $\Omega_{+}$, and by $\HH^{-}_{(\eps)}$ in the magnetic conducting part $\Omega_{-}$. Then both parts possess series expansions in powers of $\eps$: 
\begin{gather}
\label{6E4abis}
   \HH^{+}_{(\eps)}(\xx) \approx\sum_{j\geqslant0} \eps^j\HH^{+}_j (\xx)  
     \, ,\quad \xx\in\Omega_+\ ,
\\
\label{6E4bbis}
 \HH^{-}_{(\eps)}(\xx) \approx\sum_{j\geqslant0} \eps^j\HH^{-}_j (\xx; \eps)  ,\quad\xx\in\Omega_- \ ,  \quad\mbox{with}\quad   
  \HH^-_j(\xx;\eps) = \chi(y_3) \,\VV_j(y_\alpha,\frac{y_3}{\eps})\, . 
\end{gather}
In \eqref{6E4bbis},  $(y_{\alpha},y_{3})$ is a {\em normal coordinate system} to the surface $\Sigma$ which is defined in  $\cU_-$,  
a tubular neighborhood of the surface $\Sigma$ in the domain $\Omega_{-}$ (cf. Figure~\ref{Fig1}): $y_\alpha$ ($\alpha=1,2$) are tangential coordinates on $\Sigma$, and $y_3$ is the normal coordinate to $\Sigma$, cf. {\it e.g.}, \cite{CDFP11}. The function $\yy\mapsto\chi(y_3)$ is a smooth cut-off with support in $\overline\cU_-$ and equal to $1$ in a smaller tubular neighborhood of $\Sigma$. The vector fields 
$\VV_{j}: (y_\alpha,Y_3)\mapsto\VV_{j}(y_\alpha,Y_3)$ are \textit{profiles} defined on $\Sigma\times\R^+$: They are exponentially decreasing with respect to $Y_{3}$ and are smooth in all variables. 

\input contribF4T.tex
\begin{figure}[ht]
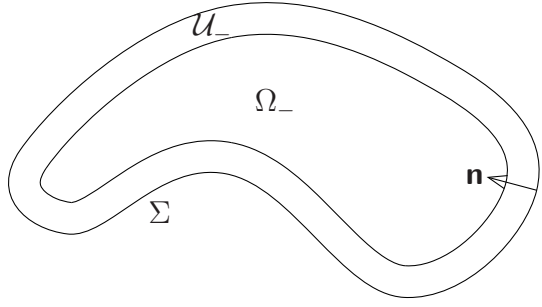

\begin{center}
\def\EpsTube{-10}
\figinit{1.2pt}
\pssetupdate{yes}
\figpt 8:(-48,0)
\figpt 22:(-92,15.5)\figpt 23:(-27.2,60.8)
\figpt 24:(46.3,41.5)\figpt 25:(70.8,2.8)
\figpt 26:(29,-31)\figpt 27:(-27.2,8)\figpt 28:(-77,-11)
\figpt 2:(-24,51)
\figpt 3:(50,50)
\figpt 16:(-70,20)\figpt 15:(0,30)
\figpt 17:(40,10) \figpt 18:(-60,-20)
\figpt 21:(50,65) \figpt 31:(30,-18)  \figpt 34:(-30,54) \figpt 36:(-200,30)
 \figpt 35:(-80,18) 
\figpt 19:(61,65) \figpt 20:(3,27)  \figpt 30:(42.6,33)  \figpt 32:(55,7) \figpt 33:(61.6,6.1) 
\psbeginfig{}
\pssetfillmode{no}\pssetgray{0}
\pscurve[22,23,24,25,26,27,28,22,23,24]
\figptscontrolcurve 40,\NbC[22,23,24,25,26,27,28,22,23,24]
\psEpsLayer \EpsTube,\NbC[40,41,42,43,44,45,46,47,48,49,50,51,52,53,54,55,56,57,58,59,60,61]
\psarrow[25,32]
\pssetfillmode{yes}\pssetgray{0.5}
\psendfig
\figvisu{\figBoxA}{}
{
\figwritew 15: $\Omega\con$(6pt)
\figwritec [34]{$\cU_{-}$
}
\figwrites 8: $\Sigma$ (1pt)
\figwritew 32: $\nn$(1pt)
\figsetmark{$\figBullet$}
}
\centerline{\box\figBoxA}
 \caption{A tubular neighbourhood of the surface $\Sigma$}
\label{Fig1}	
\end{center}
\end{figure}

\subsection{First terms of the multiscale expansion}
\label{S3.1bis} 

In this section, we provide the construction of the first profiles $\VV_{j}=(\cV_{j},\sv_{j})$ and of the first terms $\HH^+_j$. The first profile $\VV_0$  in the magnetic conductor is zero: 
\begin{equation}
\VV_{0} =0 \, .
\label{V0cdbis}
\end{equation}
Then, the first term of the magnetic field in the non-magnetic  region solves Maxwell equations with perfectly insulating electric boundary conditions on $\partial\Omega_{+}= \Sigma \cup\Gamma$ (we recall $\kappa_{+}=\omega\sqrt{\varepsilon_{0}\mu_{+}}$, cf.  \eqref{2Eeps}):
\begin{equation*}
 \left\{
   \begin{array}{lll}
    \rot\rot  \HH^+_{0} - \kappa_{+}^2({1+\frac{i}{\delta_{+}^2}})  \HH^+_{0} =  \rot\jj    \quad&\mbox{in}\quad \Omega_{+}
\\[0.5ex]
\HH^+_{0}\times\nn=0 \quad &\mbox{on}\quad \Sigma\cup \Gamma .
   \end{array}
    \right.
\end{equation*}

The first profile in the magnetic region is a tangential field which is exponential with a complex rate  $\lambda$:
\begin{equation}
\label{V1cd}
\cV_{1}(y_{\alpha},Y_{3}) = -\sj_{0}(y_{\alpha})\;\mathrm{e}^{-\lambda Y_{3}}\, .
\end{equation}
Here $\sj_{0}(y_{\alpha})=   \lambda^{-1} ({1+\frac{i}{\delta_{-}^2}}) ({1+\frac{i}{\delta_{+}^2}})^{-1} \left( \rot \HH_{0}^+ \times\nn \right) (y_{\alpha},0)$,  
and $\lambda$ is given by: 
\begin{equation}
\label{lambda}
\lambda=\kappa_{+} \sqrt[4]{1+ \frac1{\delta_{-}^{4}}}\,\mathrm{e}^{\di i\tfrac{\theta(\delta_{-})-\pi}{2}} \quad \mbox{with}\quad  \delta_{-} =\sqrt{{\omega\varepsilon_0} /{\sigma_{-}}}  \ ,\quad \mbox{and}\quad  \theta(\delta_{-})=\arctan\frac1{\delta_{-}^{2}} \, .
\end{equation} 
Note that $\Re \lambda>0$, and if $\sj_{0}$ is not identically $0$, there exists a constant $C>0$ independent of $\eps$ such that
\begin{equation}
\label{Eestim}
   C^{-1}\sqrt \eps \leqslant
   \|\HH^-_1(\,\cdot\,;\eps) \|_{0,\Omega_{-}}\leqslant C\sqrt \eps \ .
\end{equation}    
Note also that the normal component of the profile $\VV_1$ is zero:  
\begin{equation*}
\sv_{1}=0\,.
\end{equation*}
The next term in the  non-magnetic 
region solves:
\begin{equation*}
 \left\{
   \begin{array}{lll}
    \rot\rot  \HH^+_{1} - \kappa_{+}^2({1+\frac{i}{\delta_{+}^2}})  \HH^+_{1} =  0   \quad&\mbox{in}\quad \Omega_{+}
\\[0.5ex]
\HH^+_{1}\times\nn= -\sj_{0} \times\nn \quad &\mbox{on}\quad \Sigma
\\[0.5ex]
  \HH^+_{1} \times \nn= 0   \quad &\mbox{on}\quad \Gamma .
   \end{array}
    \right.
\end{equation*}

 Like above, define $\sj_1$ as  $\sj_{1}(y_{\alpha})=   \lambda^{-1} ({1+\frac{i}{\delta_{-}^2}}) ({1+\frac{i}{\delta_{+}^2}})^{-1} \left( \rot \HH_{1}^+ \times\nn \right) (y_{\alpha},0)$ on the interface $\Sigma$. Then, the tangential components of the profile $\VV_{2}$ are given by the tangential field  $\cV_{2}$:
\begin{gather}
\label{V2cdbis}
   \cV_{2}(y_{\alpha},Y_{3})=
   \Big[ -\sj_{1} + \big(\lambda^{-1} + Y_{3}\big) \left(\cC-\cH\right)  \sj_{0} 
   \Big](y_{\alpha}) \;\,\mathrm{e}^{-\lambda Y_{3}}\, .
 \end{gather}
Here $\cH=\tfrac12\, b_{\alpha}^{\alpha}$ is the \textit{mean curvature} of the surface $\Sigma$\footnote{In particular, the sign of $\cH$ depends on the orientation of the surface $\Sigma$. As a convention, the unit normal vector $\nn$ on the surface $\Sigma$ is inwardly oriented to $\Omega_{-}$, see Figure~\ref{Fig1}. }, and $\cC$ is the curvature tensor field on  $\Sigma$ defined by 
\begin{equation}
\label{EcC}
(\cC  \sj)_{\alpha}=b^{\beta}_{\alpha}\, \sj_{\beta} \, ,
\end{equation}
with $b_{\alpha}^{\beta}=a^{\beta\gamma}b_{\gamma\alpha}$, and $a^{\beta\gamma}$ is the inverse of the metric tensor $a_{\beta\gamma}$ in $\Sigma$, and $b_{\gamma\alpha}$ is the curvature tensor in $\Sigma$.
The next term which is determined is the normal component $\sv_{2}$ of the profile $\VV_{2}$: 
\begin{equation*}
\sv_{2}(y_{\alpha},Y_{3})= 
   -\lambda^{-1}  \Div_{\Sigma} \ \sj_{0}(y_{\alpha})\,\mathrm{e}^{-\lambda Y_{3}} \, .
\end{equation*}
 Here $\Div_{\Sigma}$ is the surface divergence operator on $\Sigma$. The next term in the non-magnetic region solves :
\begin{equation*}
 \left\{
   \begin{array}{lll}
    \rot\rot  \HH^+_{2} - \kappa_{+}^2({1+\frac{i}{\delta_{+}^2}})  \HH^+_{2} =  0   \quad&\mbox{in}\quad \Omega_{+}
\\[0.9ex]
\nn\times \HH^+_{2}\times\nn= -\sj_{1}+ \lambda^{-1}\left(\cC-\cH\right)  \sj_{0} \quad &\mbox{on}\quad \Sigma
\\[0.5ex]
  \HH^+_{2} \times \nn= 0   \quad &\mbox{on}\quad \Gamma .
   \end{array}
    \right.
\end{equation*}

\subsection{Validation of the multiscale expansion}
\label{Svalid}

\newcommand  {\CC}{\boldsymbol{\mathsf C}}
\newcommand  {\jjw}{\widetilde{\boldsymbol{\mathsf\jmath}}}

The validation of the multiscale expansion \eqref{6E4abis}-\eqref{6E4bbis} for the magnetic field $\HH_{(\eps)}$ 
consist in proving estimates for remainders $\RR_{m;\,\eps}$ which are defined as 
\begin{equation}
\label{6E5}
   \RR_{m;\,\eps} = \HH_{(\eps)} - \sum_{j=0}^m \eps^j\HH_j \quad\mbox{in}\quad\Omega\,.
\end{equation}
This is done by an evaluation of the right hand side when the Maxwell operator is applied to $\RR_{m;\,\eps}$. By construction (cf. Sec. \ref{AH2}, Appendix \ref{AppA}), we obtain
\begin{equation}
\label{6E6}
\left\{
   \begin{array}{lll}
   \rot \alpha^{-1}_+ \rot\RR^+_{m;\,\eps} - \kappa_{+}^2\RR^+_{m;\,\eps} &=\quad 0 
   \quad & \mbox{in}\quad \Omega_+
   \\[0.5ex]
   \rot \alpha^{-1}_-\rot\RR^-_{m;\,\eps} - \eps^{-2}\kappa_{+}^2\RR^-_{m;\,\eps} &=\quad \jj^-_{m;\,\eps} 
   \quad & \mbox{in}\quad \Omega_-
   \\[0.5ex]
   \big[\RR_{m;\,\eps}\times\nn\big]_\Sigma &=\quad 0 \quad  &\mbox{on}\quad \Sigma
   \\[0.5ex]
   \big[\underline{\alpha}^{-1}\rot\RR_{m;\,\eps}\times\nn\big]_\Sigma &=\quad \g_{m;\,\eps}  
   \quad  &\mbox{on}\quad \Sigma
   \\[0.5ex]
   \RR^+_{m;\,\eps}\times\nn &=\quad 0 \quad &\mbox{on}\quad \partial\Omega\,.
   \end{array}
    \right.
\end{equation}
Here, $\underline{\alpha}=(\alpha_{+}, \alpha_{-})$, $\alpha_+=1+i/\delta_{+}^2$ and $\alpha_-=1+i/\delta_{-}^2$, and $[\HH\times\nn]_\Sigma$ denotes the jump of $\HH\times\nn$ across $\Sigma$. The right hand sides (residues) $\jj^-_{m;\,\eps}$ and $\g_{m;\,\eps}$ are, roughly, of the order $\eps^{m}$ :  we have $\jj^-_{0;\,\eps}=0$ and $\g_{0;\, \eps}=\mathcal{O}(1)$ ;  for all $m\in\N\setminus\{0\}$, we have $\jj^-_{m;\,\eps}=\mathcal{O}({\eps^{m-1}})$ and $\g_{m;\, \eps}=\mathcal{O}({\eps^{m}})$, and we have  the following estimates 
\begin{equation}
\label{6E7}
   \|\jj^-_{m;\,\eps}\|_{0,\Omega_-}   \leqslant C_m\eps^{m-1}     \quad \mbox{and} \quad 
   \|\g_{m;\,\eps}\|_{\frac12,\Sigma}   
\leqslant
   C_m\eps^{m},
\end{equation}
where $C_m>0$ is independent of $\eps$. The main result of this section is the following.

\begin{thm}
\label{6T1}
In the framework above, for all $m\in\N$ and $\eps\in(0,\eps_0]$, the remainder $\RR_{m;\,\eps}$ \eqref{6E5} satisfies the optimal estimate
\begin{equation}
\label{6E8}
   \|\RR^+_{m;\,\eps}\|_{0,\Omega_+} + \|\rot\RR^+_{m;\,\eps}\|_{0,\Omega_+} + 
   \eps^{-\frac12} \|\RR^-_{m;\,\eps}\|_{0,\Omega_-} + 
   \eps^{\frac12} \|\rot\RR^-_{m;\,\eps}\|_{0,\Omega_-} \leqslant C_m\eps^{m+1}.
   \!\!\!\!
\end{equation}
\end{thm}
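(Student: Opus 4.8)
The plan is to reduce the remainder estimate to the uniform a priori bounds of Theorem~\ref{2T0} (equivalently Corollary~\ref{5C1}) applied to a suitably corrected remainder. The system \eqref{6E6} is \emph{almost} a Maxwell transmission problem of the type covered by Theorem~\ref{2T0}, except for the nonzero jump $\g_{m;\,\eps}$ of the tangential trace of $\underline\alpha^{-1}\rot\RR_{m;\,\eps}$ across $\Sigma$ and the volume source $\jj^-_{m;\,\eps}$ supported in $\Omega_-$. First I would dispose of the jump: using the smoothness of $\Sigma$ (Assumption~\ref{H2}) and the estimate $\|\g_{m;\,\eps}\|_{\frac12,\Sigma}\le C_m\eps^m$ from \eqref{6E7}, I construct a lifting $\WW_{m;\,\eps}$ supported near $\Sigma$ on the \emph{non-magnetic} side, belonging to $\bH_0(\rot,\Omega)$, with $[\underline\alpha^{-1}\rot\WW_{m;\,\eps}\times\nn]_\Sigma=\g_{m;\,\eps}$, $\WW_{m;\,\eps}$ vanishing in $\Omega_-$, and satisfying $\|\WW_{m;\,\eps}\|_{0,\Omega}+\|\rot\WW_{m;\,\eps}\|_{0,\Omega}\le C_m\eps^m$ (a standard trace-lifting argument, e.g.\ via $\bH^1$ extension of a tangential field whose curl has the prescribed tangential jump). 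Setting $\widetilde\RR_{m;\,\eps}=\RR_{m;\,\eps}-\WW_{m;\,\eps}$, the new unknown solves a transmission problem of exactly the form \eqref{MS}--\eqref{PIbc} (with $\mu_r=\eps^{-2}$) and with a right-hand side $\jjw_{m;\,\eps}\in\bL^2(\Omega)$ whose norm is $\mathcal O(\eps^{m-1})$ on $\Omega_-$ (coming from $\jj^-_{m;\,\eps}$ and from $\rot\alpha_+^{-1}\rot\WW_{m;\,\eps}-\kappa_+^2\WW_{m;\,\eps}$, which is $\mathcal O(\eps^m)$).

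Next I would apply Corollary~\ref{5C1} to $\widetilde\RR_{m;\,\eps}$. However a direct application only gives $\|\widetilde\RR_{m;\,\eps}\|_{0,\Omega}\le C\|\jjw_{m;\,\eps}\|_{0,\Omega}\le C_m\eps^{m-1}$, which loses two powers of $\eps$ relative to the claimed $\eps^{m+1}$. The key point is that the right-hand side lives \emph{only in the magnetic conductor} $\Omega_-$, and in \eqref{3E1}/\eqref{5E2} the magnetic field on $\Omega_-$ is controlled with the extra factor $\sqrt{\mu_r}=\eps^{-1}$; conversely, testing the variational formulation \eqref{FVH0} for $\widetilde\RR_{m;\,\eps}$ against itself and extracting real and imaginary parts as in \S\ref{Sec3.1} and \S\ref{4.2}, the source term $\int_{\Omega_-}\alpha_-^{-1}\jjw_{m;\,\eps}\cdot\rot\overline{\widetilde\RR_{m;\,\eps}}$ can be bounded by $\|\jjw_{m;\,\eps}\|_{0,\Omega_-}\,\|\rot\widetilde\RR_{m;\,\eps}\|_{0,\Omega_-}$, and $\|\rot\widetilde\RR_{m;\,\eps}\|_{0,\Omega_-}$ is itself $\mathcal O(\eps^{-1})$ times the controlled quantities. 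Pushing this bookkeeping through (i.e.\ rerunning the proof of Corollary~\ref{5C1} but keeping track of the localization of the source and of the $\eps$-weights in the norms on $\Omega_\pm$) yields $\|\widetilde\RR^+_{m;\,\eps}\|_{0,\Omega_+}+\|\rot\widetilde\RR^+_{m;\,\eps}\|_{0,\Omega_+}+\eps^{-1/2}\|\widetilde\RR^-_{m;\,\eps}\|_{0,\Omega_-}+\eps^{1/2}\|\rot\widetilde\RR^-_{m;\,\eps}\|_{0,\Omega_-}\le C_m\eps^{m}$. To recover the full power $\eps^{m+1}$ one then exploits that the construction in Appendix~\ref{AppA} is carried out to order $m+1$: writing $\RR_{m;\,\eps}=\eps^{m+1}\HH_{m+1}+\RR_{m+1;\,\eps}$ and applying the order-$\eps^{m+1}$ bound just obtained to $\RR_{m+1;\,\eps}$, together with the a priori size $\|\HH^-_{m+1}(\cdot;\eps)\|_{0,\Omega_-}\le C\sqrt\eps$ (the profile scaling behind \eqref{Eestim}) and $\HH^+_{m+1}=\mathcal O(1)$ on $\Omega_+$, gives \eqref{6E8} after adding $\WW_{m;\,\eps}=\mathcal O(\eps^m)$ — wait, here one must instead absorb $\WW$ at the level $\RR_{m+1}$, so one uses the lifting with $\g_{m+1;\,\eps}=\mathcal O(\eps^{m+1})$, which is consistent.

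More precisely the clean induction is: prove by induction on $m$ that \eqref{6E8} holds, using at step $m$ the decomposition $\RR_{m-1;\,\eps}=\eps^{m}\HH_{m}+\RR_{m;\,\eps}$, the explicit bounds on $\HH^\pm_m$, and the energy argument above applied to $\RR_{m;\,\eps}$ with its $\mathcal O(\eps^{m-1})$ source on $\Omega_-$ and $\mathcal O(\eps^m)$ jump on $\Sigma$; the two gained powers come exactly from (i) the source being confined to $\Omega_-$ where the field carries a weight $\eps^{-1/2}$ and its curl a weight $\eps^{1/2}$, giving one extra power from Cauchy--Schwarz on the weighted norms, and (ii) a second power obtained by iterating once more (peeling off $\HH_{m+1}$). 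The main obstacle I anticipate is this \emph{gain of two powers of $\eps$}: Corollary~\ref{5C1} as stated is not sharp enough, so one must reopen its proof and track how the localization of $\jjw_{m;\,\eps}$ in the high-permeability region interacts with the $\eps$-weighted energy norm $\|\cdot\|_{0,\Omega_+}+\|\rot\cdot\|_{0,\Omega_+}+\eps^{-1/2}\|\cdot\|_{0,\Omega_-}+\eps^{1/2}\|\rot\cdot\|_{0,\Omega_-}$ — i.e.\ establishing the refined a priori estimate $\mathcal N_\eps(\RR)\le C\big(\eps\,\|\jj^-\|_{0,\Omega_-}+\|\g\|_{\frac12,\Sigma}\big)$ for that weighted norm $\mathcal N_\eps$. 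A secondary technical point is the construction of the lifting $\WW_{m;\,\eps}$ with the correct one-sided support and the $\eps^m$-size control of both $\WW$ and $\rot\alpha_+^{-1}\rot\WW-\kappa_+^2\WW$, which relies on the smoothness of $\Sigma$ and on $\g_{m;\,\eps}$ being smooth and $\mathcal O(\eps^m)$ in $\bH^{1/2}(\Sigma)$.
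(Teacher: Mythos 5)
Your overall strategy is sound, and the first half (the one-sided trace lifting that removes the jump $\g_{m;\,\eps}$ while staying supported in $\Omega_+$, at the price of an extra $\bL^2$ source of size $\eps^{m}$ there) is exactly what the paper does. Where you diverge is in how the two missing powers of $\eps$ are recovered after the crude application of Theorem \ref{2T0} gives only $\eps^{m-1}$. The paper gains \emph{both} powers by the same peeling device you use for one of them: it applies the crude bound at order $m+2$, obtaining $\|\RR_{m+2;\,\eps}\|_{0,\Omega}+\|\rot\RR_{m+2;\,\eps}\|_{0,\Omega}\leqslant C\eps^{m+1}$, then writes $\RR_{m;\,\eps}=\eps^{m+1}\HH_{m+1}+\eps^{m+2}\HH_{m+2}+\RR_{m+2;\,\eps}$ and uses the explicit profile scalings \eqref{6E17} to convert this into the weighted estimate \eqref{6E8}. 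This costs nothing beyond constructing two more terms of the expansion and entirely avoids the refined a priori estimate $\mathcal{N}_\eps(\RR)\leqslant C\big(\eps\,\|\jj^-\|_{0,\Omega_-}+\|\g\|_{\frac12,\Sigma}\big)$ that you correctly flag as the main obstacle of your route. That refined estimate is plausible (the imaginary part of the energy identity does give $\eps^{-1}\|\cdot\|_{0,\Omega_-}$ control, so a source supported in $\Omega_-$ and paired with the field itself gains a full factor $\eps$ after Young's inequality), but proving it means reopening the contradiction argument of Lemma \ref{5L1} for a right-hand side of a different shape: the residue $\jj^-_{m;\,\eps}$ is a source for the \emph{second-order} equation, so in the variational formulation it pairs with $\overline{\HH'}$, not with $\rot\overline{\HH'}$ as in \eqref{FVH0}. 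Your sketch, which bounds the source term by $\|\jj^-\|_{0,\Omega_-}\,\|\rot\widetilde\RR\|_{0,\Omega_-}$ and appeals to the curl carrying the weight $\eps^{1/2}$, would actually \emph{lose} half a power rather than gain one; the gain must come from the $\eps^{-1}\|\cdot\|_{0,\Omega_-}$ control of the field, not its curl. So your plan is workable only after (i) fixing that pairing and (ii) proving the refined weighted estimate from scratch; the paper's ``apply the crude bound at order $m+2$'' trick makes both steps unnecessary.
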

\begin{proof}
{\sc Step 1.}  Since $\g_{m;\,\eps} \neq 0$, according to \eqref{6E6} the vector field $\underline{\alpha}^{-1}\rot\RR_{m;\,\eps}$ does not define an element of the Hilbert space $\bH(\rot,\Omega)$:
\begin{equation*}
\bH(\rot,\Omega) =\{\uu\in\bL^2(\Omega)\; |\ \rot\uu\in\bL^2(\Omega) \}\ .
\end{equation*}
Hence  we can not apply Theorem \ref{2T0} directly because the vector field $\rot\underline{\alpha}^{-1}\rot\RR_{m;\,\eps} - \kappa_{+}^2\mur(\eps)\RR_{m;\,\eps}$ does not define an element of $\bL^2(\Omega)$. 
We are going to introduce a corrector $\CC_{m;\,\eps}$ satisfying suitable estimates and so that
\begin{equation}
\label{6E6C}
\left\{
   \begin{array}{lll}
   \big[(\RR_{m;\,\eps} - \CC_{m;\,\eps})\times\nn\big]_\Sigma 
   &=\quad 0 \quad  &\mbox{on}\quad \Sigma
   \\[0.5ex]
   \big[\underline{\alpha}^{-1}\rot(\RR_{m;\,\eps}-\CC_{m;\,\eps})\times\nn\big]_\Sigma 
   &=\quad 0   \quad  &\mbox{on}\quad \Sigma
    \\[0.5ex]
   (\RR_{m;\,\eps}-\CC_{m;\,\eps})\times\nn  
   &=\quad 0   \quad  &\mbox{on}\quad \partial\Omega\ .
   \end{array}
    \right.
\end{equation}

\noindent
Construction of $\CC_{m;\,\eps}$: We take $\CC_{m;\,\eps}=0$ in $\Omega_-$ and use a trace lifting to define $\CC_{m;\,\eps}$ in $\Omega_+$. It suffices that
\begin{equation}
\label{6E7C}
\left\{
   \begin{array}{lll}
   \CC^+_{m;\,\eps} \times\nn 
   &=\quad 0 \quad  &\mbox{on}\quad \Sigma
   \\[0.5ex]
  {\alpha}_{+}^{-1} \rot\CC^+_{m;\,\eps} \times\nn 
   &=\quad \g_{m;\,\eps}   \quad  &\mbox{on}\quad \Sigma
     \\[0.5ex]
   \CC^+_{m;\,\eps} \times\nn 
   &=\quad  0   \quad  &\mbox{on}\quad \partial\Omega \ .
   \end{array}
    \right.
\end{equation}
Denoting by $C_\beta$ and $C_3$ the tangential and normal components of $\CC^+_{m;\,\eps}$ associated with a system of normal coordinates $\yy=(y_\beta,y_3)$, and by $g_\beta$ the components of $\g_{m;\,\eps}$ the above system becomes (cf.\ \cite[Sec. 5.2, Eq. (5.12)]{CDP09}) 
\begin{equation}
\label{6E8C}
\left\{
   \begin{array}{lll}
   C_\beta 
   &=\quad 0 \quad  &\mbox{on}\quad \Sigma
   \\[0.5ex]
   {\alpha}_{+}^{-1} ( \partial_3 C_\beta - \partial_\beta C_3 )
   &=\quad g_\beta   \quad  &\mbox{on}\quad \Sigma
    \\[0.5ex]
        C_\beta 
   &=\quad 0   \quad  &\mbox{on}\quad \partial\Omega \ .
   \end{array}
    \right.
\end{equation}
It can be solved in $\bH^2(\Omega_+)$ choosing $C_3=0$ and a standard lifting of the first two traces on $\Sigma$ and $\partial\Omega$ with the estimate
\begin{equation}
\label{6E9C}
   \|\CC^+_{m;\,\eps}\|_{2,\Omega_+} \leqslant C\|\g_{m;\,\eps}\|_{\frac12,\Sigma} \ .
\end{equation}

We deduce from assumption \eqref{6E7}, and \eqref{6E9C}
\begin{equation}
\label{6E11}
   \|\CC^+_{m;\,\eps}\|_{2,\Omega_+}
 \leqslant 
   C\eps^{m} ,
\end{equation}
where $C$ may depend on $m$.
We set
\begin{equation}
\label{6E12}
   \widetilde\RR_{m;\,\eps} := \RR_{m;\,\eps} - \CC_{m;\,\eps} 
   \quad\mbox{and}\quad
  \jjw_{m;\,\eps} := 
  \rot\underline{\alpha}^{-1}\rot\widetilde\RR_{m;\,\eps} - \kappa_{+}^2\mur(\eps)\widetilde\RR_{m;\,\eps} \ .
\end{equation}
Hence by construction, we have $\jjw_{m;\,\eps}\in\bL^{2}(\Omega)$ with the estimates
\begin{equation}
\label{6E13}
   \|\jjw_{m;\,\eps}\|_{0,\Omega}  \leqslant 
   C\eps^{m-1} .
\end{equation}

\noindent{\sc Step 2.}
We can apply Theorem \ref{2T0}  to the couple $(\HH,\EE) = (\widetilde\RR_{m;\,\eps},(\bs-i\omega\eps_0)^{-1}\rot\widetilde\RR_{m;\,\eps})$ and, thanks to \eqref{6E12}, we obtain 
\begin{equation*}
   \|\widetilde\RR_{m;\,\eps}\|_{0,\Omega} + \|\rot\widetilde\RR_{m;\,\eps}\|_{0,\Omega}
   \leqslant C \|\jjw_{m;\,\eps}\|_{0,\Omega}\,.
\end{equation*}
Combining this estimate with \eqref{6E11} and \eqref{6E13}, we deduce
\begin{equation}
\label{6E14}
   \|\RR_{m;\,\eps}\|_{0,\Omega} + \|\rot\RR_{m;\,\eps}\|_{0,\Omega}
   \leqslant  C\eps^{m-1},
\end{equation}
where $C$ may depend on $m$.

\noindent{\sc Step 3.}
In order to have an optimal estimate for $\RR_{m;\,\eps}$, we use \eqref{6E14} for $m+2$. We obtain
\begin{equation}
\label{6E15}
   \|\RR_{m+2;\,\eps}\|_{0,\Omega} + \|\rot\RR_{m+2;\,\eps}\|_{0,\Omega}
   \leqslant  C\eps^{m+1} \, .
\end{equation}
But we have the formula
\begin{equation}
\label{6E16}
   \RR_{m;\,\eps} = \sum_{j=m+1}^{m+2} \eps^j \HH_j + \RR_{m+2;\,\eps} \, ,
\end{equation}
and using  \eqref{6E4bbis}, we have also for any $j\in\N$
\begin{equation}
\label{6E17}
   \|\HH^+_j\|_{\bH(\rot,\Omega_+)} + 
   \eps^{-\frac12} \|\HH^-_j\|_{0,\Omega_-} + 
   \eps^{\frac12} \|\rot\HH^-_j\|_{0,\Omega_-} \leqslant C.
\end{equation}
We finally deduce the wanted estimate \eqref{6E8} from \eqref{6E15} to \eqref{6E17}.
\end{proof}

\section{Conclusion and perspectives}

In this work we tackled a transmission problem  in materials with high contrast in magnetic permeabilities. We proved uniform a priori estimates for the electromagnetic field solution of the time-harmonic Maxwell equations as the relative magnetic permeability $\mu_{r}$ between  a magnetic conductor and a non-magnetic material tends to infinity, and when the interface between the subdomains is Lipschitz. Assuming smoothness for the interface between the subdomains, we derived a multiscale expansion   for the magnetic field  in powers of  $\dfrac1{\sqrt{\mu_{r}}}$ with profiles inside the magnetic conductor, and we proved error estimates at any order.

One perspective would be to carry out numerical experiments in axisymmetric domains \cite{B-D-M99,HiLe05,Nk05,CDP09}. This would make it possible in particular to illustrate our theoretical results. The multiscale expansion provides also an a priori knowledge on the magnetic field useful for the design of meshes in view of a quality finite element approximation. Thus, the mesh should be matched to the boundary of the magnetic conductor  and can be coarse far from its boundary in the magnetic conductor, as with the  {\em skin effect} in non-magnetic conductive materials. It would be useful also to derive impedance boundary conditions (see, {\it e.g.},  \cite{artola1992diffraction,lafitte1993equations,HJN08} for scattering
problems from absorbing obstacles) on the interface (magnetic conductor / non-magnetic material)  in order  to reduce the computational costs.

\clearpage

\appendix

\section{Elements of proof for the multiscale expansion}
\label{AppA}

	In the framework of Section \ref{S3bis}, we assume that the surfaces $\Sigma$ and $\Gamma$ are smooth (cf. Assumption \ref{H2}). Recall that there exists  $\eps_{\star}>0$ and such that for all $\eps\in(0,\eps_{\star})$, the  problem \eqref{MS}-\eqref{PIbc} has a unique solution $(\EE_{(\eps)}, \,  \HH_{(\eps)})$ which is denoted by $(\EE^+_{(\eps)}, \,  \HH^+_{(\eps)})$ in $\Omega_{+}$, and by $(\EE^-_{(\eps)}, \,  \HH^-_{(\eps)})$ in $\Omega_{-}$.
Furthermore, we assume that the right hand side $\jj$ is smooth and its support does not  the domain $\Omega_{-}$.

Recall that $(y_{\alpha},y_{3})$ is a {\em normal coordinate system} to the surface $\Sigma$ in  $\cU_-$, see Figure~\ref{Fig1}. The function $\yy\mapsto\chi(y_3)$ is a smooth cut-off with support in $\overline\cU_-$ and equal to $1$ in a smaller tubular neighborhood of $\Sigma$.  
\begin{thm}
\label{ThA1}
Under the above assumptions, the magnetic field $\HH_{(\eps)}$  possesses the asymptotic expansion (see Theorem \ref{6T1} in Section \ref{Svalid} for precise estimates):  
\begin{gather}
\label{EAa}
     \HH^+_{(\eps)}(\xx) \approx \sum_{j\geqslant0} \eps^j\HH^+_j(\xx) \, ,
\\
\label{EAc}
   \HH^-_{(\eps)}(\xx) \approx \sum_{j\geqslant0} \eps^j\HH^-_j(\xx;\eps) 
   \quad\mbox{with}\quad   
   \HH^-_j(\xx;\eps) = \chi(y_3) \,\VV_j(y_\alpha,\frac{y_3}{\eps})\, ,
\end{gather}
where $\VV_j(y_\alpha,Y_{3})\rightarrow 0$ when $Y_{3}\rightarrow \infty$. Moreover, for any $j\in\N$, we have 
\begin{equation}
\label{EAd}
  \HH^+_j\in\bH(\rot,\Omega_+) \quad\mbox{and}\quad
  \VV_{j}\in\bH(\rot,\Sigma\times\R_+).
\end{equation}
\end{thm}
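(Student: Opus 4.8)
The plan is to establish the asymptotic expansion \eqref{EAa}--\eqref{EAc} by the standard Ansatz-substitution method for singularly perturbed elliptic problems, and then verify the regularity \eqref{EAd} term by term. First I would write the curl-curl equation satisfied by $\HH_{(\eps)}$ on each subdomain (obtained from \eqref{MS} by eliminating $\EE$), namely $\rot\alpha_+^{-1}\rot\HH^+_{(\eps)} - \kappa_+^2\HH^+_{(\eps)} = \rot\jj$ in $\Omega_+$ and $\rot\alpha_-^{-1}\rot\HH^-_{(\eps)} - \eps^{-2}\kappa_+^2\HH^-_{(\eps)} = 0$ in $\Omega_-$, together with the transmission conditions $[\HH_{(\eps)}\times\nn]_\Sigma = 0$, $[\underline\alpha^{-1}\rot\HH_{(\eps)}\times\nn]_\Sigma = 0$, and the boundary condition $\HH^+_{(\eps)}\times\nn = 0$ on $\Gamma$. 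In the non-magnetic part I would insert the plain power series $\HH^+_{(\eps)} \approx \sum_{j\ge0}\eps^j\HH^+_j$. In the magnetic conductor, because of the $\eps^{-2}$ coefficient the field is of boundary-layer type, so I would pass to the rescaled normal variable $Y_3 = y_3/\eps$ in the tubular neighbourhood $\cU_-$, expand the curl-curl operator written in normal coordinates $(y_\alpha,y_3)$ as a formal power series in $\eps$ (this is the content of \S\ref{AH1}), and plug in $\HH^-_{(\eps)} \approx \sum_{j\ge0}\eps^j\chi(y_3)\VV_j(y_\alpha,Y_3)$.

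The key computational step (\S\ref{AH2}) is the collection of like powers of $\eps$. On the magnetic side this yields, at each order, a system of ordinary differential equations in $Y_3$ with coefficients depending on $y_\alpha$ for the profile $\VV_j$; the leading operator is (essentially) $-\partial_{Y_3}^2 + \kappa_+^2\alpha_-^{-1}$ acting on the tangential components, whose decaying solutions are the exponentials $\mathrm{e}^{-\lambda Y_3}$ with $\lambda$ as in \eqref{lambda}, while the normal component is recovered algebraically from a divergence-type relation. The right-hand sides at order $j$ involve only the previously constructed profiles $\VV_0,\dots,\VV_{j-1}$ and the traces on $\Sigma$ of $\HH^+_0,\dots,\HH^+_{j-1}$, so the construction is inductive. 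On the non-magnetic side, matching the transmission conditions at $\Sigma$ order by order produces, for each $j$, a curl-curl boundary value problem for $\HH^+_j$ in $\Omega_+$ with perfectly insulating condition on $\Gamma$ and a tangential-trace datum on $\Sigma$ built from $\VV_{j-1}$ (and from $\rot\jj$ when $j=0$); solvability and elliptic regularity of these problems — using that $\Sigma$ and $\Gamma$ are smooth by Assumption \ref{H2} — give $\HH^+_j\in\bH(\rot,\Omega_+)$, indeed smooth up to the boundary. The condition $\VV_0 = 0$ comes out of the order-$\eps^0$ equation since the leading non-magnetic datum involves no profile, consistent with \eqref{V0cdbis}.

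For the decay statement $\VV_j(y_\alpha,Y_3)\to 0$ as $Y_3\to\infty$ I would check inductively that each profile is a polynomial in $Y_3$ times $\mathrm{e}^{-\lambda Y_3}$ with $\Re\lambda>0$ (true at order $1$ by \eqref{V1cd}, and preserved because the inhomogeneous ODEs have right-hand sides of the same structure and one always selects the exponentially decaying particular solution plus the decaying homogeneous solution fixed by the transmission data); this also gives $\VV_j\in\bH(\rot,\Sigma\times\R_+)$ after noting the smoothness in $y_\alpha$, which propagates from the smoothness of $\Sigma$ and of the traces of $\HH^+_j$. Finally, the precise sense of "$\approx$" is exactly the remainder estimate \eqref{6E8} of Theorem \ref{6T1}, whose proof (already given) relies on the uniform estimate of Theorem \ref{2T0}; so the present theorem is essentially the qualitative packaging of that construction, and I would simply reference \S\ref{AH1}--\S\ref{AH3} for the detailed algebra. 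The main obstacle is organizing the bookkeeping of the formal expansion of the curl-curl operator in normal coordinates and verifying at each inductive step that the compatibility (solvability) conditions for the profile ODEs and for the $\HH^+_j$ boundary value problems are automatically met by the data generated at the previous order — in particular that the relevant surface-divergence / mean-value constraints hold, which is where the geometry of $\Sigma$ (the curvature terms $\cC$, $\cH$ in \eqref{V2cdbis}) enters.
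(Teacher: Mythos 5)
Your proposal follows essentially the same route as the paper's Appendix~\ref{AppA}: rewrite \eqref{MS} as the curl--curl transmission problem \eqref{EMH}, expand the rescaled Maxwell operator $\LL[\eps]$ and trace operator $\BB[\eps]$ in normal coordinates, identify powers of $\eps$ to get the interleaved induction (profile ODEs in $Y_3$ with decaying exponential solutions, algebraic recovery of the normal component, curl--curl problems for $\HH^+_j$ in $\Omega_+$), and appeal to Theorem~\ref{6T1} for the precise meaning of $\approx$. Only minor bookkeeping differs (e.g.\ the trace datum for $\HH^+_j$ on $\Sigma$ is $\cV_j(\cdot,0)$, itself built from $\HH^+_{j-1}$, rather than from $\VV_{j-1}$ directly), which does not affect the structure of the argument.
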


Hereafter, we present elements of proof of this theorem and details about the terms in asymptotics \eqref{EAa}--\eqref{EAc}. In \S\ref{AH1}, we expand the ``magnetic'' Maxwell operators in power series of $\eps$ inside the boundary layer $\cU_-$. We deduce in \S\ref{AH2} the equations satisfied by the magnetic profiles, and we derive explicitly the first ones in \S\ref{AH3}.

\subsection{Expansion of the Maxwell operators}
\label{AH1}

Integrating by parts in the variational formulation \eqref{FVH0}, we find the following Maxwell transmission problem for the magnetic field (we recall $ \kappa_{+}=\omega\sqrt{\varepsilon_{0}\mu_{+}}$, cf. \eqref{2Eeps})

\begin{equation}
\label{EMH}
 \left\{
   \begin{array}{lll}
   \rot\rot  \HH^+_{(\eps)} - \kappa_{+}^2({1+\frac{i}{\delta_{+}^2}}) \HH^+_{(\eps)} = \rot\jj   \quad&\mbox{in}\quad \Omega_{+}
\\[0.8ex]
\eps^{2}\rot\rot \HH^-_{(\eps)} -\kappa_{+}^2({1+\frac{i}{\delta_{-}^2}}) \HH^-_{(\eps)} =0
 \quad&\mbox{in}\quad \Omega_{-}
\\[0.8ex]
 ({1+\frac{i}{\delta_{+}^2}})^{-1} \rot\HH^+_{(\eps)}\times\nn=  ({1+\frac{i}{\delta_{-}^2}})^{-1}\rot\HH^-_{(\eps)}\times\nn \quad &\mbox{on} \quad \Sigma
\\[0.8ex]
\HH^+_{(\eps)}\times\nn=\HH^-_{(\eps)}\times\nn  \quad &\mbox{on}\quad \Sigma
\\[0.8ex]
  \HH^+_{(\eps)} \times\nn= 0   
  \quad &\mbox{on}\quad \Gamma .
   \end{array}
    \right.
\end{equation}
As a consequence of the Maxwell transmission problem, we have also  
\[
   \Div\bm\HH_{(\eps)} = 0 \quad\mbox{in}\quad\Omega.
\]
Therefore, we have in particular $\bm\HH_{(\eps)} \in\bH(\Div,\Omega)$, where 
\begin{equation*}
\bH(\Div,\Omega) =\{\uu\in\bL^2(\Omega)\; |\ \Div\uu\in\L^2(\Omega) \}\ ,
\end{equation*}
and from which we deduce the extra transmission condition
\begin{equation}
\label{AHn}
 \eps^{2}\HH^+_{(\eps)}\cdot\nn=\HH^-_{(\eps)}\cdot\nn  \quad \mbox{on}\quad \Sigma.
\end{equation}

In that follows we denote by $\LL(y_{\alpha}, h;D_{\alpha}, \partial_{3}^h)$ the second order Maxwell operator $\eps^{2}\rot\rot-\kappa_{+}^2({1+\frac{i}{\delta_{-}^2}})\Id$ set in $\cU_{-}$ in a {\em normal coordinate system} $(y_{\alpha},h)$. Here $D_{\alpha}$ is the covariant derivative on the interface $\Sigma$, and $\partial_{3}^h$ is  the partial derivative with respect to the normal coordinate $y_{3}=h$. 
 Then after the scaling $Y_{3}=\eps^{-1} h$,  the operator which is written $\LL[\eps]$, expands in power series of $\eps$ with coefficients intrinsic operators :
\begin{equation*}
 \LL[\eps]=\di\sum_{n=0}^{\infty} \eps^n\LL^{n} 
  \ .
\end{equation*}
We denote by $L_{\alpha}^n$ the surface components of $\LL^{n}$. Using a splitting  a three-dimensional vector field $\VV$ into its normal component $\sv$ and its tangential component $\cV$ that can be viewed as a one-form field $\cV_{\alpha}$, and using the summation convention of repeated two dimensional indices (represented by greek letters), we have
\begin{equation}
L_{\alpha}^0(\VV)=-\partial_{3}^2 \cV_{\alpha}-\kappa_{+}^2({1+\frac{i}{\delta_{-}^2}}) \cV_{\alpha} 
  \ \  \mbox{and}  \ \ 
L_{\alpha}^1(\VV)=-2b_{\alpha}^\beta\partial_{3} \cV_{\beta} 
+ \partial_{3} \partial_{\alpha} \sv 
+  b_{\beta}^{\beta}\partial_{3}\cV_{\alpha}\ ,
\label{EHsL0L1}
\end{equation}
were $\partial_{3}$ is the partial derivative with respect to $Y_{3}$ and  $b_{\alpha}^{\beta}=a^{\beta\gamma}b_{\gamma\alpha}$. Here $a^{\beta\gamma}$ is the inverse of the metric tensor $a_{\beta\gamma}$ in $\Sigma$, and $b_{\gamma\alpha}$ is the curvature tensor in $\Sigma$. Then we denote by $L_{3}^n$ the transverse components of $\LL^{n}$. We have
\begin{equation}
L_{3}^0(\VV)=-\kappa_{+}^2 ({1+\frac{i}{\delta_{-}^2}})\sv 
  \quad \mbox{and}  \quad
L_{3}^1(\VV)=\gamma_{\alpha}^{\alpha}(\partial_{3}\VV)+  b_{\beta}^{\beta}\partial_{3} \sv\ .
\label{EHtL0L1}
\end{equation}
Here $\gamma_{\alpha\beta}(\VV)=\frac12 (D_{\alpha}\cV_\beta + D_{\beta}\cV_\alpha)-b_{\alpha\beta}\sv$  is the change of metric tensor.

Then we denote by $\BB(y_{\alpha}, h;\partial_{\alpha}, \partial_{3}^h)$ the tangent trace operator $\rot\cdot\times\nn$ on $\Sigma$ in a {\em normal coordinate system}. If $\VV=(\cV_{\alpha},\sv)$,  we have (cf. \cite[Sec. A.4]{CDFP11})
\begin{equation*}
\left( \BB(y_{\alpha}, h;\partial_{\alpha} , \partial_{3}^h)\VV\right)_{\alpha}=\partial_{3}^h \cV_{\alpha} - \partial_{\alpha} \sv \ .
\end{equation*} 
Then if $\BB[\eps]$ is the operator  obtained from $\BB$ in $\cU_{-}$ after the scaling $Y_{3}=\eps^{-1} h$, we have
\begin{equation*}
 \BB[\eps]=\eps^{-1}\BB^{0}+\BB^{1} \ .
\end{equation*}
Finally we denote by $B_{\alpha}^n$ the surface components of $\BB^{n}$.  We have
\begin{equation}
\label{EB0B1}
B_{\alpha}^0(\VV)= \partial_{3}\cV_{\alpha}   \quad \mbox{and}  \quad B_{\alpha}^1(\VV)= -\partial_{\alpha} \sv \ .
\end{equation}

\subsection{Equations for the coefficients of the magnetic field}
\label{AH2}

According to the second and third equations in system \eqref{EMH}, the profiles $\VV_{j}$ and the coefficients $\HH_{j}^+$ of the magnetic field satisfy the following system
\begin{gather}
\label{ELV}
 \LL[\eps] \sum_{j\geqslant0} \eps^j\VV_j(y_{\alpha},Y_{3})=0  \quad \mbox{in}\quad \Sigma \times I,
 \\
\label{ETV}
\BB[\eps] \sum_{j\geqslant0} \eps^j\VV_j(y_{\alpha},0)=({1+\frac{i}{\delta_{-}^2}}) ({1+\frac{i}{\delta_{+}^2}})^{-1}\sum_{j\geqslant0} \eps^j \rot \HH_{j}^+ \times\nn \quad \mbox{on} \quad \Sigma \ ,
\end{gather}
with $I = (0,+\infty)$. 

Then we perform in \eqref{ELV}-\eqref{ETV} the identification of terms with the same power in $\eps$. The components of the equation \eqref{ELV} are the collections of equations
\begin{equation}
\label{EsLVj}
\LL^0(\VV_{0})=0\ , \quad \LL^0(\VV_{1})+\LL^1(\VV_{0})=0 \ , \quad \mbox{and} \quad \di\sum_{l=0}^{n} \LL^{n-l}(\VV_{l})=0  \ ,
\end{equation}
for all $n\geqslant2$. The surface components of the equation \eqref{ETV} write 
\begin{equation}
\label{EsTVj}
 B_{\alpha}^0(\VV_{0}) =0 \quad
\mbox{and} \quad B_{\alpha}^0(\VV_{n})+B_{\alpha}^1(\VV_{n-1}) =  ({1+\frac{i}{\delta_{-}^2}}) ({1+\frac{i}{\delta_{+}^2}})^{-1}\left(\rot \HH_{n-1}^+ \times\nn \right)_{\alpha} \ ,
\end{equation}
for all $n\geqslant1$.

Using the components of the operator $\LL^0$ \eqref{EHsL0L1}-\eqref{EHtL0L1}, and  the surface components of $\BB_{0}$ and $\BB_{1}$ \eqref{EB0B1}, 
 and expanding $\HH^+_{(\eps)}$ in $\Omega_+$, we thus see that, according to the system \eqref{EMH}, the profiles $\VV_n = (\cV_{n},\sv_n)$ and the terms $\HH^+_{n}$ have to satisfy, for all $n \geq 0$, 
 \begin{equation}
\label{EH}
 \left\{
   \begin{array}{clll}
      (i) & \quad &
   \partial_{3}^2 \cV_{n,\alpha}-\lambda^2 \cV_{n,\alpha} =  \di \sum_{j = 0}^{n-1} L_\alpha^{n-j}(\VV_j) 
 \quad&\mbox{in}\quad \Sigma \times I
\\[0.8ex]
 (ii) & \quad &
     \partial_{3}\cV_{n,\alpha}  = \partial_{\alpha} \sv_{n-1}+ ({1+\frac{i}{\delta_{-}^2}}) ({1+\frac{i}{\delta_{+}^2}})^{-1} \left( \rot \HH_{n-1}^+ \times\nn \right)_{\alpha}
  \quad&\mbox{on}\quad \Sigma
 \\[0.8ex]
    (iii) & \quad &
   -\lambda^2 \sv_n =  \di \sum_{j = 0}^{n-1} L_3^{n-j}( \VV_j) 
 \quad&\mbox{in}\quad \Sigma \times I
\\[0.8ex]
     (iv) & \quad &  
   \rot\rot  \HH^+_{n} - \kappa_{+}^2({1+\frac{i}{\delta_{+}^2}})  \HH^+_{n} = \delta_n^0 \rot\jj   \quad&\mbox{in}\quad \Omega_{+}
\\[0.8ex]
     (v) & \quad &
\nn\times  \HH^+_{n}\times\nn  =  \cV_{n} &\mbox{on} \quad \Sigma   
\\[0.8ex]
      (vi) & \quad &
  \HH^+_{n} \times\nn= 0 \quad   &\mbox{on}\quad \Gamma  \ .
   \end{array}
    \right.
\end{equation}
Here we have 
\begin{equation*}
\lambda=\kappa_{+} \sqrt[4]{1+ \frac1{\delta_{-}^{4}}}\,\mathrm{e}^{i\frac{\theta(\delta_{-})-\pi}{2}} \quad \mbox{with} \quad \theta(\delta_{-})=\arctan\frac1{\delta_{-}^{2}}\ ,
 \end{equation*}
 so that:  $-\lambda^2=\kappa_{+}^2({1+\frac{i}{\delta_{-}^2}})$ and $\Re \lambda>0$.

Note that the transmission condition \eqref{AHn} implies the extra  condition 
\begin{equation}
\label{EH1}
 \sv_n =   \HH^+_{n-2}\cdot\nn \quad \mbox{on}\quad \Sigma. 
\end{equation}
The set of equations \eqref{EH}  allows to determine $\VV_n $ and $\HH^+_{n}$ by induction.

\subsection{First terms of the magnetic field asymptotics}
\label{AH3}
According to equations $(i)$-$(ii)$ in system \eqref{EH} for $n=0$, $\cV_{0}$ satisfies the following ODE 
\begin{equation}
\left\{
   \begin{array}{ll}
\partial_{3}^2\cV_{0}(.,Y_{3}) -\lambda^2 \cV_{0}(.,Y_{3})&=0 \quad \mbox{for} \quad Y_{3}\in I \ ,
\\[0.5ex]
\partial_{3}\cV_{0}(.,0)&=0 \ .
\end{array}
  \right.
\label{EV0}
\end{equation}
The unique solution of \eqref{EV0} such that $\cV_{0}\rightarrow 0$ when $Y_{3}\rightarrow\infty$, is $\cV_{0}=0$. From equation $(iii)$ in system \eqref{EH}  for $n=0$, there holds $\sv_{0}=0$. We infer 
\begin{equation}
\VV_{0}(y_{\beta},Y_{3}) =0 \, .
\label{V0cd}
\end{equation}
From equations $(iv)$-$(vi)$ in \eqref{EH} for $n=0$, and from \eqref{V0cd}, the first asymptotic of the magnetic field in the non-magnetic region solves the following problem 
\begin{equation*}
 \left\{
   \begin{array}{lll}
    \rot\rot  \HH^+_{0} - \kappa_{+}^2({1+\frac{i}{\delta_{+}^2}})  \HH^+_{0} =  \rot\jj    \quad&\mbox{in}\quad \Omega_{+}
\\[0.5ex]
\HH^+_{0}\times\nn=0 \quad &\mbox{on}\quad \Sigma\cup \Gamma .
   \end{array}
    \right.
\end{equation*}

The next term determined in the asymptotic expansion is $\cV_{1}$. From equations $(i)$-$(ii)$ in \eqref{EH}  for $n=1$, $\cV_{1}$ satisfies for $Y_3\in I$
\begin{equation*}
\left\{
   \begin{array}{lll}
\partial_{3}^2\cV_{1}(.,Y_{3}) -\lambda^2 \cV_{1}(.,Y_{3})&=0 \ ,
\\[0.5ex]
\partial_{3}\cV_{1}(.,0)&=  ({1+\frac{i}{\delta_{-}^2}}) ({1+\frac{i}{\delta_{+}^2}})^{-1}\left( \rot \HH_{0}^+ \times\nn \right)(.\,,0)    \ .
\end{array}
  \right.
\end{equation*}
We denote by $\jj_{k}(y_{\beta})=   \lambda^{-1} ({1+\frac{i}{\delta_{-}^2}}) ({1+\frac{i}{\delta_{+}^2}})^{-1} \left( \rot \HH_{k}^+ \times\nn \right) (y_{\beta},0)$ for $k=0,1$. Hence,
\begin{equation}
\label{V1cd}
\cV_{1}(y_{\beta},Y_{3}) = -\jj_{0}(y_{\beta})\;\mathrm{e}^{-\lambda Y_{3}}
\, .
\end{equation}
From equation $(iii)$ in \eqref{EH}  for $n=1$, and from \eqref{V0cd}, we obtain $\sv_{1}=0$. From equations $(iv)$-$(vi)$ in \eqref{EH} for $n=1$, and from \eqref{V1cd}, the asymptotic of order $1$ for the magnetic field in the non-magnetic part solves :
\begin{equation*}
 \left\{
   \begin{array}{lll}
    \rot\rot  \HH^+_{1} - \kappa_{+}^2({1+\frac{i}{\delta_{+}^2}})  \HH^+_{1} =  0   \quad&\mbox{in}\quad \Omega_{+}
\\[0.5ex]
\HH^+_{1}\times\nn= -\jj_{0} \times\nn \quad &\mbox{on}\quad \Sigma
\\[0.5ex]
  \HH^+_{1} \times \nn= 0   \quad &\mbox{on}\quad \Gamma .
   \end{array}
    \right.
\end{equation*}

Then, from equations $(i)$-$(ii)$ in \eqref{EH} for $n=2$, $\cV_{2,\alpha}$ solves the ODE for $Y_3\in I$:
\begin{equation*}
\left\{
   \begin{array}{lll}
   \partial_{3}^2\cV_{2,\alpha}(.\,,Y_{3}) -\lambda^2 \cV_{2,\alpha}(.\,,Y_{3}) & = -2b_{\alpha}^{\sigma}\partial_{3}\cV_{1,\sigma}(.\,,Y_{3})+b_{\beta}^{\beta}\partial_{3}\cV_{1,\alpha}(.\,,Y_{3}) 
\\[0.8ex]
\partial_{3}\cV_{2,\alpha}(.\,,0) & =  ({1+\frac{i}{\delta_{-}^2}}) ({1+\frac{i}{\delta_{+}^2}})^{-1}\left( \rot \HH_{1}^+ \times\nn \right)_{\alpha}(.\,,0)\ .
   \end{array}
\right.
\end{equation*}
We denote by $\sj_{k,\alpha}$ the surface components of $\jj_{k}$, for $k=0,1$. We obtain
\begin{gather}
\label{V2cd}
   \cV_{2,\alpha}(y_{\beta},Y_{3})=
   \Big[ -\sj_{1,\alpha} + \big(\lambda^{-1} + Y_{3}\big) 
   \big( b_{\alpha}^{\sigma}\ \sj_{0,\sigma}-\cH\,\sj_{0,\alpha}\big) 
   \Big](y_{\beta}) \;\,\mathrm{e}^{-\lambda Y_{3}}\, .
 \end{gather}
From equation $(iii)$ in \eqref{EH} for $n=2$, we infer
\begin{equation*}
\sv_{2}(y_{\beta},Y_{3})= 
   -\lambda^{-1} D_{\alpha}\ \sj_{0}^{\alpha}(y_{\beta})\,\mathrm{e}^{-\lambda Y_{3}}  .
\end{equation*}
From equations $(iv)$-$(vi)$ in \eqref{EH} for $n=2$, and from \eqref{V2cd}, the asymptotic of order $2$ for the magnetic field in the non-magnetic part solves :
\begin{equation*}
 \left\{
   \begin{array}{lll}
    \rot\rot  \HH^+_{2} - \kappa_{+}^2({1+\frac{i}{\delta_{+}^2}})  \HH^+_{2} =  0   \quad&\mbox{in}\quad \Omega_{+}
\\[0.9ex]
\nn\times \HH^+_{2}\times\nn= -\jj_{1}+ \lambda^{-1}\left(\cC-\cH\right)  \jj_{0} \quad &\mbox{on}\quad \Sigma
\\[0.5ex]
  \HH^+_{2} \times \nn= 0   \quad &\mbox{on}\quad \Gamma .
   \end{array}
    \right.
\end{equation*}

\newpage
\bibliographystyle{plain}
\bibliography{biblio}

\def\cprime{$'$}
\begin{thebibliography}{10}

\bibitem{AP23}
D.~Abou El Nasser El~Yafi and V.~P{\'e}ron.
\newblock Efficient asymptotic models for axisymmetric eddy current problems in
  linear ferromagnetic materials.
\newblock {\em Computers \& Mathematics with Applications}, 151:335--345, 2023.

\bibitem{APPK21}
D.~Abou El Nasser El~Yafi, V.~P{\'e}ron, R.~Perrussel, and
  L.~Kr{\"a}henb{\"u}hl.
\newblock Numerical study of the magnetic skin effect: Efficient
  parameterization of 2d surface-impedance solutions for linear ferromagnetic
  materials.
\newblock {\em International Journal of Numerical Modelling: Electronic
  Networks, Devices and Fields}, 36(3):e3051, 2023.

\bibitem{ammari2020superresolution}
H.~Ammari, B.~Li, and J.~Zou.
\newblock Superresolution in recovering embedded electromagnetic sources in
  high contrast media.
\newblock {\em SIAM Journal on Imaging Sciences}, 13(3):1467--1510, 2020.

\bibitem{ammari2023mathematical}
H.~Ammari, B.~Li, and J.~Zou.
\newblock Mathematical analysis of electromagnetic scattering by dielectric
  nanoparticles with high refractive indices.
\newblock {\em Transactions of the American Mathematical Society},
  376(01):39--90, 2023.

\bibitem{ABDG98}
C.~Amrouche, C.~Bernardi, M.~Dauge, and V.~Girault.
\newblock Vector potentials in three-dimensional non-smooth domains.
\newblock {\em Math. Methods Appl. Sci.}, 21(9):823--864, 1998.

\bibitem{artola1991diffraction}
M.~Artola and M.~Cessenat.
\newblock Diffraction d'une onde {\'e}lectromagn{\'e}tique par une couche
  composite mince accol{\'e}e {\`a} un corps conducteur {\'e}pais. ii, cas
  d'inclusions (ou d'une couche) de forte permitivit{\'e} et de forte
  perm{\'e}abilit{\'e}.
\newblock {\em Comptes rendus de l'Acad{\'e}mie des sciences. S{\'e}rie 1,
  Math{\'e}matique}, 313(6):381--385, 1991.

\bibitem{artola1992diffraction}
M.~Artola and M.~Cessenat.
\newblock Diffraction d'une onde {\'e}lectromagn{\'e}tique par un obstacle
  born{\'e} {\`a} permittivit{\'e} et perm{\'e}abilit{\'e} {\'e}lev{\'e}es.
\newblock {\em Comptes rendus de l'Acad{\'e}mie des sciences. S{\'e}rie 1,
  Math{\'e}matique}, 314(5):349--354, 1992.

\bibitem{B-D-M99}
C.~Bernardi, M.~Dauge, and Y.~Maday.
\newblock {\em Spectral methods for axisymmetric domains}, volume~3 of {\em
  Series in Applied Mathematics (Paris)}.
\newblock Gauthier-Villars, \'Editions Scientifiques et M\'edicales Elsevier,
  Paris, 1999.
\newblock Numerical algorithms and tests due to Mejdi Aza{\"{\i}}ez.

\bibitem{CDFP11}
{G}. {C}aloz, {M}. {D}auge, {E}. {F}aou, and V.~P\'eron.
\newblock On the influence of the geometry on skin effect in electromagnetism.
\newblock {\em Computer Methods in Applied Mechanics and Engineering},
  200(9-12):1053--1068, 2011.

\bibitem{CDP09}
G.~Caloz, M.~Dauge, and V.~P\'eron.
\newblock Uniform estimates for transmission problems with high contrast in
  heat conduction and electromagnetism.
\newblock {\em Journal of Mathematical Analysis and Applications},
  370(2):555--572, 2010.

\bibitem{CoDa00}
M.~Costabel and M.~Dauge.
\newblock Singularities of electromagnetic fields in polyhedral domains.
\newblock {\em Arch. Ration. Mech. Anal.}, 151(3):221--276, 2000.

\bibitem{C-D-N99}
M.~Costabel, M.~Dauge, and S.~Nicaise.
\newblock Singularities of {M}axwell interface problems.
\newblock {\em M2AN Math. Model. Numer. Anal.}, 33(3):627--649, 1999.

\bibitem{DFP09}
M.~Dauge, E.~Faou, and V.~P\'eron.
\newblock Comportement asymptotique \`a haute conductivit\'e de l'\'epaisseur
  de peau en \'electromagn\'etisme.
\newblock {\em C. R. Acad. Sci. Paris S\'er. I Math.}, 348(7-8):385--390, 2010.

\bibitem{eskola2001measurement}
L.~Eskola, R.~Puranen, and H.~Soininen.
\newblock Measurement of magnetic properties of steel sheets.
\newblock {\em Geophysical prospecting}, 47(4):593--602, 2001.

\bibitem{HJN08}
H.~Haddar, P.~Joly, and H.-M. Nguyen.
\newblock Generalized impedance boundary conditions for scattering problems
  from strongly absorbing obstacles: the case of {M}axwell's equations.
\newblock {\em Math. Models Methods Appl. Sci.}, 18(10):1787--1827, 2008.

\bibitem{HiLe05}
R.~Hiptmair and P.~D. Ledger.
\newblock Computation of resonant modes for axisymmetric {M}axwell cavities
  using {$hp$}-version edge finite elements.
\newblock {\em Internat. J. Numer. Methods Engrg.}, 62(12):1652--1676, 2005.

\bibitem{huang2007uniform}
J.~Huang and J.~Zou.
\newblock Uniform a priori estimates for elliptic and static maxwell interface
  problems.
\newblock {\em Discrete and Continuous Dynamical Systems Series B}, 7(1):145,
  2007.

\bibitem{issa2019boundary}
M.~Issa, J.-R. Poirier, R.~Perrussel, O.~Chadebec, and V.~P{\'e}ron.
\newblock Boundary element method for 3d conductive thin layer in eddy current
  problems.
\newblock {\em COMPEL-The international journal for computation and mathematics
  in electrical and electronic engineering}, 38(2):502--521, 2019.

\bibitem{kawaguchi2022time}
H.~Kawaguchi, T.~Muramatsu, M.~Katoh, M.~Hosaka, and Y.~Takashima.
\newblock Time-domain eddy current and wake fields analysis of pulsed multipole
  magnet beam injector in synchrotron radiation ring.
\newblock {\em IEICE Transactions on Electronics}, 105(4):146--153, 2022.

\bibitem{lafitte1993equations}
O.~Lafitte and G.~Lebeau.
\newblock {\'E}quations de maxwell et op{\'e}rateur d'imp{\'e}dance sur le bord
  d'un obstacle convexe absorbant.
\newblock {\em Comptes rendus de l'Acad{\'e}mie des sciences. S{\'e}rie 1,
  Math{\'e}matique}, 316(11):1177--1182, 1993.

\bibitem{MS84}
R.~C. MacCamy and E.~Stephan.
\newblock Solution procedures for three-dimensional eddy current problems.
\newblock {\em J. Math. Anal. Appl.}, 101(2):348--379, 1984.

\bibitem{MS85}
R.~C. MacCamy and E.~Stephan.
\newblock A skin effect approximation for eddy current problems.
\newblock {\em Arch. Rational Mech. Anal.}, 90(1):87--98, 1985.

\bibitem{mukherjee20113d}
S.~Mukherjee and M.~E. Everett.
\newblock 3d controlled-source electromagnetic edge-based finite element
  modeling of conductive and permeable heterogeneities.
\newblock {\em Geophysics}, 76(4):F215--F226, 2011.

\bibitem{Nk05}
B.~Nkemzi.
\newblock On the solution of {M}axwell's equations in axisymmetric domains with
  edges.
\newblock {\em ZAMM Z. Angew. Math. Mech.}, 85(8):571--592, 2005.

\bibitem{noh2016analysis}
K.~Noh, S.~Oh, S.~J. Seol, K.~H. Lee, and J.~Byun.
\newblock Analysis of anomalous electrical conductivity and magnetic
  permeability effects using a frequency domain controlled-source
  electromagnetic method.
\newblock {\em Geophysical Journal International}, 204(3):1550--1564, 2016.

\bibitem{PardoDTP06}
D.~Pardo, L.~Demkowicz, C.~Torres-Verd{\'{\i}}n, and M.~Paszynski.
\newblock Two-dimensional high-accuracy simulation of resistivity
  logging-while-drilling ({LWD}) measurements using a self-adaptive
  goal-oriented {$hp$} finite element method.
\newblock {\em SIAM J. Appl. Math.}, 66(6):2085--2106, 2006.

\bibitem{Pe19}
V.~P{\'e}ron.
\newblock Asymptotic models and impedance conditions for highly conductive
  sheets in the time-harmonic eddy current model.
\newblock {\em SIAM Journal on Applied Mathematics}, 79(6):2242--2264, 2019.

\bibitem{PePo21}
V.~P{\'e}ron and C.~Poignard.
\newblock On a magnetic skin effect in eddy current problems: the magnetic
  potential in magnetically soft materials.
\newblock {\em Zeitschrift f{\"u}r Angewandte Mathematik und Physik}, 72(4),
  2021.

\bibitem{ren2016homogenization}
X.~Ren, R.~Corcolle, and L.~Daniel.
\newblock A homogenization technique to calculate eddy current losses in soft
  magnetic composites using a complex magnetic permeability.
\newblock {\em IEEE Transactions on Magnetics}, 52(12):1--9, 2016.

\bibitem{S83}
E.~Stephan.
\newblock Solution procedures for interface problems in acoustics and
  electromagnetics.
\newblock In {\em Theoretical acoustics and numerical techniques}, volume 277
  of {\em CISM Courses and Lectures}, pages 291--348. Springer, Vienna, 1983.

\end{thebibliography}

\end{document}